 \newtheorem{theorem}{Theorem}[section]
 \newtheorem{corollary}{Corollary}[section]
 \newtheorem{lemma}{Lemma}[section]
 \newtheorem{proposition}{Proposition}[section]
 \newtheorem{remark}{Remark}[section]
 \numberwithin{equation}{section}
\begin{document}

\title[strong openness and optimal $L^{2}$ extension]
 {strong openness of multiplier ideal sheaves
 and optimal $L^{2}$ extension}

\author{Qi'an Guan}
\address{Qi'an Guan: School of Mathematical Sciences,
and Beijing International Center for Mathematical Research,
Peking University, Beijing, 100871, China.}
\email{guanqian@math.pku.edu.cn}
\author{Xiangyu Zhou}
\address{Xiangyu Zhou: Institute of Mathematics, AMSS, and Hua Loo-Keng Key Laboratory of Mathematics, Chinese Academy of Sciences, Beijing, China}
\email{xyzhou@math.ac.cn}

\thanks{The authors were partially supported by NSFC-11431013. The first author was supported by
NSFC-11522101}

\subjclass{}

\keywords{strong openness conjecture,
plurisubharmonic function, multiplier ideal sheaf, optimal $L^2$ extension theorem}

\date{\today}

\dedicatory{}

\commby{}

%%% ----------------------------------------------------------------------

\begin{abstract}
In this note,
we reveal that our solution of Demailly's strong openness conjecture implies a matrix version
of the conjecture;
our solutions of two conjectures of Demailly-Koll\'{a}r and Jonsson-Mustat\u{a} implies
the truth of twisted versions of the strong openness conjecture;
our optimal $L^{2}$ extension implies Berndtsson's
positivity of vector bundles associated to holomorphic fibrations over a unit disc.
\end{abstract}

%%% ----------------------------------------------------------------------
\maketitle
%%% ----------------------------------------------------------------------

\section{Introduction}

The multiplier ideal sheaves giving an invariant of plurisubharmonic singularities play an important role in several complex variables and complex geometry.
Various important and fundamental properties about the multiplier ideal sheaves have been established, e.g.,
coherence, integrally closedness, Nadel vanishing theorem, the restriction formula and subadditivity property,
the strong openness property (i.e. solution of Demailly's strong openness conjecture), and other important properties. There have been found many interesting applications of these properties.

The Ohsawa-Takegoshi $L^2$ extension and its optimal version was used in the
the proofs of some above properties and their applications,
e.g. the restriction formula and subadditivity property,
the strong openness property, and some other properties and applications.

\

In this note, we'll give some further consequences of our recent works about the strong openness of the multiplier ideal sheaves and optimal $L^2$ extension.

\subsection{A matrix version of Demailly's strong openness conjecture}

Let $X$ be a complex manifold with dimension $n$ and $\varphi$ be a
plurisubharmonic function on $X$ (see \cite{kisel,Si}).
The multiplier ideal sheaf $\mathcal{I}(\varphi)$ is defined to
be the sheaf of germs of holomorphic functions $f$ such that
$|f|^{2}e^{-2\varphi}$ is locally integrable (see \cite{Nadel90,tian87,siu96,siu00,demailly-note2000,siu05,demailly2010}). The basic properties of multiplier ideal sheaves include:
$\mathcal{I}(\varphi)$ is a coherent analytic and integrally closed sheaf and satisfies the Nadel vanishing theorem.

Let
$$\mathcal{I}_{+}(\varphi):=\cup_{\varepsilon>0}\mathcal{I}((1+\varepsilon)\varphi).$$

In \cite{GZopen-a}, we proved Demailly's strong openness conjecture posed in \cite{demailly-note2000,demailly2010} (related effectiveness result see \cite{GZopen-effect}).
\begin{theorem}
\label{thm:open_GZ}\cite{GZopen-a}
$\mathcal{I}_{+}(\varphi)=\mathcal{I}(\varphi)$ holds.
\end{theorem}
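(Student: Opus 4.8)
The plan is to reduce the assertion to a local, essentially one-dimensional decay estimate for a distribution function, and then to prove that estimate by a self-referential application of the Ohsawa--Takegoshi $L^{2}$ extension theorem with its \emph{optimal} constant. The inclusion $\mathcal{I}_{+}(\varphi)\subseteq\mathcal{I}(\varphi)$ is trivial: the statement is local, so fix $0\in\mathbb{C}^{n}$ and, after shrinking the neighbourhood and subtracting a constant (which changes neither $\mathcal{I}(\varphi)$ nor $\mathcal{I}_{+}(\varphi)$), assume $\varphi$ is plurisubharmonic and negative on a ball $B\ni0$; then $(1+\varepsilon)\varphi\le\varphi$, so $e^{-2(1+\varepsilon)\varphi}\ge e^{-2\varphi}$ and $\mathcal{I}((1+\varepsilon)\varphi)\subseteq\mathcal{I}(\varphi)$ for all $\varepsilon>0$. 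For the nontrivial inclusion, take a germ $f$ at $0$, represented by a holomorphic function near $\overline{B}$ with $c_{0}:=\int_{B}|f|^{2}e^{-2\varphi}\,d\lambda<+\infty$; one has to produce $\varepsilon>0$ and $r>0$ with $\int_{B_{r}}|f|^{2}e^{-2(1+\varepsilon)\varphi}\,d\lambda<+\infty$.

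I first reformulate the goal. Put $G(t):=\int_{B\cap\{2\varphi<-t\}}|f|^{2}e^{-2\varphi}\,d\lambda$ for $t\ge0$. Then $G$ is nonincreasing, $G(0)=c_{0}<+\infty$, and $G(t)\to0$ as $t\to\infty$ by dominated convergence, since $\{2\varphi=-\infty\}$ has Lebesgue measure zero; moreover, writing $e^{-2\varepsilon\varphi}=1+\varepsilon\int_{0}^{-2\varphi}e^{\varepsilon s}\,ds$ and applying Fubini yields the identity
\[
\int_{B}|f|^{2}e^{-2(1+\varepsilon)\varphi}\,d\lambda=G(0)+\varepsilon\int_{0}^{\infty}e^{\varepsilon t}\,G(t)\,dt .
\]
Hence $f\in\mathcal{I}_{+}(\varphi)_{0}$ as soon as $\int_{0}^{\infty}e^{\varepsilon t}G(t)\,dt<\infty$ for some $\varepsilon>0$, for instance when $G(t)\le Ce^{-\delta t}$ for some $C,\delta>0$ (a polynomial correction would still be acceptable). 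So it suffices to prove that the mere finiteness $G(0)<\infty$ forces $G$ to decay at some exponential rate.

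The core of the proof is to deduce this decay from the optimal $L^{2}$ extension theorem. For a parameter $t$, truncate the weight, $\psi_{t}:=\max\{2\varphi,-t\}$ (plurisubharmonic, equal to $2\varphi$ on $\{2\varphi\ge-t\}$ and constant on the sublevel set $\{2\varphi<-t\}$), and introduce the minimal-extension functional $\tilde G(t)$ obtained by taking the infimum of $\int_{B\cap\{2\varphi<-t\}}|\tilde f|^{2}e^{-2\varphi}\,d\lambda$ over holomorphic $\tilde f$ on $B\cap\{2\varphi<-t\}$ whose germ at $0$ equals that of $f$, so that $\tilde G(t)\le G(t)$, $\tilde G(0)\le c_{0}$, and $\tilde G(t)\to0$. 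Using the restriction of $f$ to a thin shell $\{-t-1<2\varphi<-t\}$, cut off by a suitable function, as Cauchy data for a $\bar\partial$-problem whose weight is assembled from $\varphi$, from $\psi_{t}$, and from an auxiliary convex function of $\psi_{t}$ tuned so that the resulting $L^{2}$ estimate is precisely the Ohsawa--Takegoshi extension inequality \emph{with optimal constant}, one proves, after optimizing the auxiliary function, a concavity property of $\tilde G$ under an appropriate change of variable (morally, $r\mapsto\tilde G(-\log r)$ is concave). This concavity, together with the finiteness $\tilde G(0)<\infty$, the vanishing $\tilde G(t)\to0$, and the fact that the germ of $f$ at $0$ is a fixed finite datum that every competitor must carry, forces $\tilde G(t)\le Ce^{-\delta t}$, hence $G(t)\le Ce^{-\delta t}$. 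Feeding this back into the identity above gives $\int_{B_{r}}|f|^{2}e^{-2(1+\varepsilon)\varphi}\,d\lambda<+\infty$ for suitable $\varepsilon,r>0$, which is the assertion.

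I expect the main obstacle to be twofold. First, the accounting in the core step closes up only with the \emph{sharp} constant in the extension theorem: any constant strictly larger than the optimal one leaves a multiplicative loss at each level $t$, and these losses accumulate and destroy the decay; establishing the precise concavity (or differential inequality) for $\tilde G$, and then genuinely upgrading it to exponential decay --- which is where the finiteness and the jet of $f$ at $0$ must be used quantitatively --- is delicate. Second, $\varphi$ is merely plurisubharmonic, so its sublevel sets $\{2\varphi<-t\}$ are a priori neither smooth nor pseudoconvex, and $\psi_{t}$ is not smooth; one must therefore first replace $\varphi$ by a decreasing sequence of smooth strictly plurisubharmonic functions on a slightly smaller pseudoconvex domain (a Demailly-type regularization), carry out the construction there with constants that do not deteriorate under the approximation, and only then pass to the limit, using the coherence and integral-closedness of $\mathcal{I}(\varphi)$ together with monotone convergence to descend to the germ at $0$. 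The heart of the argument is to carry the optimal constant through both the auxiliary-weight optimization and this limiting process.
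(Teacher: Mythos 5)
You should note at the outset that this note does not actually prove Theorem \ref{thm:open_GZ}: it is quoted from \cite{GZopen-a}, and the proof given there proceeds by induction on the dimension $n$, the inductive step being carried out by restricting to hyperplane slices $\{z_{n}=w\}$, applying the induction hypothesis there, and extending back with the (optimal) $L^{2}$ extension theorem, combined with a limiting/contradiction argument. Your architecture --- no induction on dimension, but instead a concavity property of a minimal $L^{2}$ extension functional on the sublevel sets $\{2\varphi<-t\}$ --- is genuinely different from the cited proof; it is in the spirit of later work on ``minimal $L^{2}$ integrals'' rather than of \cite{GZopen-a}.

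Your reduction is correct and is the right first move: with $\varphi\le 0$ locally, the Fubini identity $\int_{B}|f|^{2}e^{-2(1+\varepsilon)\varphi}=G(0)+\varepsilon\int_{0}^{\infty}e^{\varepsilon t}G(t)\,dt$ shows that everything hinges on proving $\int_{0}^{\infty}e^{\varepsilon t}G(t)\,dt<\infty$ for some $\varepsilon>0$. But the core step is where the argument breaks. You assert that concavity of $r\mapsto\tilde G(-\log r)$, together with $\tilde G(0)<\infty$ and $\tilde G(t)\to 0$, ``forces $\tilde G(t)\le Ce^{-\delta t}$''. This implication is false for concave functions: concavity of $g(r):=\tilde G(-\log r)$ on $(0,1]$ with $g(0^{+})=0$ yields, via the chord inequality, only the \emph{lower} bound $g(r)\ge r\,g(1)$, i.e. $\tilde G(t)\ge \tilde G(0)e^{-t}$ --- which is essentially the content of Theorem \ref{thm:JM_GZ} and points in the opposite direction from what you need. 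The upper bound $g(r)\le Cr$ is equivalent to $\lim_{r\to 0^{+}}g(r)/r<\infty$, and a concave $g$ with $g(0^{+})=0$ need not satisfy this: $g(r)=(-\log r)^{-2}$ is concave near $0$, vanishes at $0^{+}$, and corresponds to $G(t)=t^{-2}$, for which $\int_{0}^{\infty}e^{\varepsilon t}G(t)\,dt=+\infty$ for every $\varepsilon>0$. Thus the entire difficulty of the theorem --- excluding precisely such slowly decaying behaviour when $\varphi$ is plurisubharmonic --- is concentrated in the step you have asserted without proof. There is also a circularity risk: in the known concavity results for minimal $L^{2}$ integrals, the admissible class in the infimum is taken modulo a multiplier ideal, and both the proof of concavity and the passage from $\tilde G(t)\to 0$ to membership of the germ in that ideal invoke the strong openness property itself; note that the present paper states explicitly that Theorem \ref{thm:JM_GZ} is proved \emph{using} Theorem \ref{thm:open_GZ}, not conversely. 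As it stands, your proposal correctly isolates where the work lies but does not supply the idea that closes the argument.
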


This means that the multiplier ideal sheaf has strong openness property. As immediate applications, several questions are solved in \cite{GZopen-c},
such as, a problem about the existence of an analytic weight in a multiplier ideal sheaf,
a conjecture about a more general vanishing theorem than Nadel's posed by Demailly (see \cite{Cao12}),
a conjecture posed by Demailly-Ein-Lazarsfeld (see \cite{DEL00} and \cite{Lazar04I}),
and a conjecture posed by Boucksom-Favre-Jonsson (see \cite{BFJ08}), etc.

Recently, we \cite{GZopen-lelong} characterize the multiplier ideal sheaves with psh weights of Lelong number one
by using the strong openness property of the multiplier ideal sheaf (Theorem \ref{thm:open_GZ} (dimension two case was solved in \cite{B-M,FM05j})).
When Lelong number is smaller than one, it was proved by Skoda that the multiplier ideal sheaves are trivial (\cite{skoda72}, see also \cite{demailly-book,demailly2010}).

When $\mathcal{I}(\varphi)$ is trivial,
Demailly's strong openness conjecture degenerates to
the openness conjecture posed in \cite{D-K01} which was proved by Berndtsson \cite{berndtsson13} (dimension two case was proved by Favre and Jonsson \cite{FM05j}).

After the strong openness conjecture was proved,
Ohsawa was invited by the second author to the  Institute of Mathematics in Chinese Academy of Sciences in January 2014 and gave three lectures.
During his lectures, Ohsawa asked the following matrix version of the strong openness conjecture:

\textbf{Question}: Let $(F_{i,j})_{1\leq i\leq k,1\leq j\leq l}$ $(k\leq l)$ be matrix such that $F_{i,j}$ are holomorphic functions on $X$
and
$$\int_{\Delta^{n}}\det((F_{i,j})\times (\overline{F_{i,j}})^{t}) e^{-\varphi}d\lambda_{n}<+\infty,$$
does there
exist a number $p>1$, such that
$$\int_{\Delta^{n}_{r}}\det((F_{i,j})\times (\overline{F_{i,j}})^{t})e^{-p\varphi}d\lambda_{n}<+\infty,$$
where $r\in(0,1)$?

Note that, by using the Cauchy-Binet formula,
\begin{equation}
\label{equ:20140219}
\begin{split}
\det((F_{i,j})\times (\overline{F_{i,j}})^{t})
&=\sum_{|J|=k}\det((F_{i,j})_{1\leq i\leq k,j\in J}\times (\overline{F_{i,j}})_{1\leq i\leq k,j\in J}^{t})
\\&=\sum_{|J|=k}\det((F_{i,j})_{1\leq i\leq k,j\in J})\times\det((\overline{F_{i,j}})_{1\leq i\leq k,j\in J}^{t})
\\&=\sum_{|J|=k}\det((F_{i,j})_{1\leq i\leq k,j\in J})\times\overline{\det((F_{i,j})_{1\leq i\leq k,j\in J})}
\end{split}
\end{equation}
where $J\subset\{1,\cdots,l\}$.
Then the above question degenerates to the strong openness conjecture, which can be answered by our solution of
the strong openness conjecture.
\begin{proposition}
\label{rem:open_matrix}
The matrix version of the strong openness conjecture holds.
\end{proposition}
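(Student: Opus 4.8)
The plan is to reduce the matrix statement to the scalar strong openness property, Theorem~\ref{thm:open_GZ}, through the Cauchy--Binet identity \eqref{equ:20140219} already recorded above, and then to upgrade the pointwise exponent improvement it provides to a uniform one by a compactness argument on a relatively compact subdomain of $\Delta^{n}$.

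First I would introduce the holomorphic functions $g_{J}:=\det((F_{i,j})_{1\leq i\leq k,\,j\in J})$ for the finitely many $J\subset\{1,\dots,l\}$ with $|J|=k$. By \eqref{equ:20140219} one has the pointwise equality $\det((F_{i,j})\times(\overline{F_{i,j}})^{t})=\sum_{|J|=k}|g_{J}|^{2}$, so the hypothesis $\int_{\Delta^{n}}\det((F_{i,j})\times(\overline{F_{i,j}})^{t})e^{-\varphi}\,d\lambda_{n}<+\infty$ is equivalent (all summands being nonnegative) to having $\int_{\Delta^{n}}|g_{J}|^{2}e^{-\varphi}\,d\lambda_{n}<+\infty$ for every such $J$. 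In the normalization used in the definition of multiplier ideal sheaves this says that the germ of $g_{J}$ at every point $o\in\Delta^{n}$ lies in $\mathcal{I}(\varphi/2)_{o}$. Applying Theorem~\ref{thm:open_GZ} with $\varphi$ replaced by $\varphi/2$, i.e.\ $\mathcal{I}_{+}(\varphi/2)=\mathcal{I}(\varphi/2)$, we obtain for each $J$ and each $o$ a number $\varepsilon(o,J)>0$ and a neighbourhood of $o$ on which $|g_{J}|^{2}e^{-(1+\varepsilon(o,J))\varphi}$ is integrable.

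Now fix $r\in(0,1)$, so that $\overline{\Delta^{n}_{r}}$ is a compact subset of $\Delta^{n}$. At each point $o$ I would intersect the finitely many neighbourhoods obtained above and replace the $\varepsilon(o,J)$ by their minimum (this only decreases the exponent, under which integrability is preserved on a small enough neighbourhood since $\varphi$ is locally bounded above), extract a finite subcover of $\overline{\Delta^{n}_{r}}$, and let $\varepsilon_{0}>0$ be the smallest of the finitely many exponents that occur. Setting $p:=1+\varepsilon_{0}>1$, summing $\int_{\Delta^{n}_{r}}|g_{J}|^{2}e^{-p\varphi}\,d\lambda_{n}<+\infty$ over the finitely many $J$ and invoking \eqref{equ:20140219} once more yields $\int_{\Delta^{n}_{r}}\det((F_{i,j})\times(\overline{F_{i,j}})^{t})e^{-p\varphi}\,d\lambda_{n}<+\infty$, which is the assertion. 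The only genuinely nontrivial ingredient is Theorem~\ref{thm:open_GZ} itself; granting it, the sole subtlety is that the improved exponent it furnishes is a priori local and depends on both the point and the minor $J$, so the point of the argument is precisely the passage to a single $p$ via compactness of $\overline{\Delta^{n}_{r}}$ and finiteness of $\{J:|J|=k\}$, and no new analytic estimate is required.
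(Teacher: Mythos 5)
Your proposal is correct and follows essentially the same route as the paper, which reduces the matrix statement to the scalar strong openness property via the Cauchy--Binet identity \eqref{equ:20140219}; you merely spell out the routine compactness step (uniformizing the exponent $p$ over $\overline{\Delta^{n}_{r}}$ and over the finitely many minors) that the paper leaves implicit. No issues.
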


\subsection{Twisted versions of Demailly's strong openness conjecture}

Let $I$ be an ideal of $\mathcal{O}_{\Delta^{n},o}$, which is generated by $\{f_{j}\}_{j=1,\cdots,l}$.
Denote by $$\log|I|:=\log\max_{1\leq j\leq l}|f_{j}|,$$
the jumping number is defined to be $c_{o}^{I}(\varphi)=\sup\{c\geq0:|I|^{2}e^{-2c\varphi}$ is $L^1$ on a neighborhood of $o\}$ (\cite{JM13}).
One can check that $\nu(\varphi,o)=0\Leftrightarrow c_{o}(\varphi)=+\infty \Leftrightarrow c_{o}^{I}(\varphi)=+\infty$,
where $I\neq \{0\}$.

It is known that $\mathcal{I}_{+}(\varphi)=\mathcal{I}(\varphi)$ for any psh $\varphi$ is equivalent to the statement that
$|I|^{2}\exp{(-2c_{o}^{I}(\varphi)\varphi)}$ is not locally integrable near $o$ for any $I$ and $\varphi$
satisfying $c_{o}^{I}(\varphi)<+\infty$.
In \cite{GZopen-b,GZopen-effect},
we prove two conjectures posed by Demailly-Koll\'{a}r (see \cite{D-K01}) and Jonsson-Mustat\u{a}
(see \cite{JM12}) respectively by using the following result.

\begin{theorem}
\label{thm:JM_GZ}\cite{GZopen-b,GZopen-effect}
If $c_{o}^{I}(\varphi)<+\infty$,
then
$$\frac{1}{r^2}\int|I|^{2}\chi_{\{c^{I}_{o}(\varphi)\varphi<\log r\}}$$
and
$$\frac{1}{r^2}\int\chi_{\{c^{I}_{o}(\varphi)\varphi-\log|I|<\log r\}}$$
have positive lower bounds independent of $r\in(0,1)$.
\end{theorem}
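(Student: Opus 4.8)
The plan is to derive both estimates from the optimal $L^{2}$ extension theorem, combined with the strong openness property of Theorem \ref{thm:open_GZ}, through the concavity of an associated minimal $L^{2}$ integral. After localizing we may take $X=\Delta^{n}$, $o=0$, with $\varphi$ plurisubharmonic on $\Delta^{n}$ and $I$ generated by $f_{1},\dots,f_{l}\in\mathcal{O}(\Delta^{n})$; set $c:=c_{o}^{I}(\varphi)\in(0,+\infty)$ and $t:=-\log r$, so that $r=e^{-t}$ and the two quantities become
$$e^{2t}\int_{\{c\varphi<-t\}}|I|^{2}\,d\lambda_{n}\qquad\text{and}\qquad e^{2t}\int_{\{c\varphi-\log|I|<-t\}}d\lambda_{n}.$$
As $c<+\infty$ forces $\nu(\varphi,o)>0$, hence $\varphi$ to be unbounded below in every neighbourhood of $o$, both sublevel sets are non-empty open sets and thus of positive Lebesgue measure; since their integrands are non-decreasing in $r$ while $1/r^{2}$ is decreasing, the desired lower bound is immediate for $r$ in any compact subinterval of $(0,1)$, and the whole point is the regime $t\to+\infty$.

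For the first quantity, the strong openness property in the form recalled above --- that $|I|^{2}e^{-2c\varphi}$ fails to be locally integrable near $o$, applied at the jumping number $c$ --- lets us fix $j_{0}$ with $(f_{j_{0}},o)\notin\mathcal{I}(c\varphi)_{o}$. Consider the minimal $L^{2}$ integral
$$G(t):=\inf\Bigl\{\int_{\{c\varphi<-t\}}|F|^{2}\,d\lambda_{n}\;:\;F\in\mathcal{O}(\{c\varphi<-t\}),\ (F-f_{j_{0}},o)\in\mathcal{I}(c\varphi)_{o}\Bigr\},\qquad t\ge0.$$
The crux is to show that $G$ is finite and non-increasing, that $s\mapsto G(-\tfrac12\log s)$ is concave on $(0,1]$, and that $G(0)>0$. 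Finiteness and concavity follow from the Ohsawa--Takegoshi $L^{2}$ estimates with \emph{optimal} constant, solving $\bar\partial$ on the pseudoconvex sublevel sets $\{c\varphi<-t\}$ with weights of the form $e^{-\alpha(2c\varphi)}$ for suitable convex increasing $\alpha$ and optimizing over the truncation level; this is exactly the mechanism behind the proof of the strong openness conjecture, and it is the optimality of the constant that produces the sharp exponent $2$ rather than a slightly larger one. Positivity, $G(0)>0$, is equivalent to the non-vanishing of the residue $\lim_{t\to+\infty}e^{2t}G(t)$ furnished by the optimal extension --- a weighted $L^{2}$-norm of $f_{j_{0}}$ along the polar set of $\varphi$ (the Ohsawa measure) --- which holds precisely because $(f_{j_{0}},o)\notin\mathcal{I}(c\varphi)_{o}$. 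Granting this, concavity and non-negativity of $g:=G(-\tfrac12\log(\cdot))$ give $g(s)\ge g(1)\,s$, i.e. $G(t)\ge G(0)e^{-2t}$; since $f_{j_{0}}$ is itself an admissible competitor and $|f_{j_{0}}|^{2}\le|I|^{2}$,
$$e^{2t}\int_{\{c\varphi<-t\}}|I|^{2}\,d\lambda_{n}\ \ge\ e^{2t}G(t)\ \ge\ G(0)\ >\ 0,$$
uniformly in $r\in(0,1)$.

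The second quantity is treated in the same way, now with trivial weight, the constant function $1$ in place of $f_{j_{0}}$, and the filtration by $\{c\varphi-\log|I|<-t\}=\{|I|^{2}e^{-2c\varphi}>e^{2t}\}$; the non-triviality input is again the non-integrability of $|I|^{2}e^{-2c\varphi}$ near $o$, i.e. that $1$ does not lie in the ideal of germs $h$ with $|h|^{2}|I|^{2}e^{-2c\varphi}$ locally integrable at $o$, which by the layer-cake formula is equivalent to $\int_{0}^{+\infty}e^{2t}\,|\{|I|^{2}e^{-2c\varphi}>e^{2t}\}\cap\Delta^{n}_{\rho}|\,dt=+\infty$ for small $\rho$, a divergence that the concavity then upgrades to the pointwise lower bound. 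I expect the main obstacle to be precisely this concavity/positivity package: extracting the sharp exponent $2$ requires the \emph{optimal} $L^{2}$ extension theorem rather than a classical Ohsawa--Takegoshi estimate, and in the second case the sets $\{c\varphi-\log|I|<-t\}$ are not sublevel sets of a plurisubharmonic function, so one must carry the ideal $I$ through the $\bar\partial$-estimates --- equivalently, after reducing to a principal ideal with the same jumping number, work along the divisor it defines --- as is done in \cite{GZopen-b,GZopen-effect}.
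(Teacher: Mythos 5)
This note does not actually contain a proof of Theorem \ref{thm:JM_GZ}: the statement is imported from \cite{GZopen-b,GZopen-effect}, with only the remark that its proof rests on the strong openness property and the optimal $L^2$ extension theorem. Your sketch correctly isolates exactly those two inputs --- strong openness applied at the jumping number $c=c^{I}_{o}(\varphi)$ to produce a generator $f_{j_0}$ with $(f_{j_0},o)\notin\mathcal{I}(c\varphi)_{o}$, and the optimality of the constant to reach the sharp exponent $2$ --- and the reduction of the first quantity to a lower bound for $e^{2t}\int_{\{c\varphi<-t\}}|f_{j_0}|^{2}$ is sound. But what you have written is a strategy outline, not a proof: the entire weight rests on the concavity of $s\mapsto G(-\tfrac12\log s)$ together with $G(0)>0$, and neither is established. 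The concavity of minimal $L^2$ integrals along sublevel filtrations is itself a theorem of comparable depth to the one being proved (it belongs to later work, not to the papers cited here), and it does not follow from the one-line recipe ``solve $\bar\partial$ with weights $e^{-\alpha(2c\varphi)}$ and optimize''; you have asserted the conclusion. Likewise, justifying $G(0)>0$ by ``equivalence with the non-vanishing of the residue $\lim_{t\to+\infty}e^{2t}G(t)$'' is circular, since positivity of that residue is essentially the statement to be proved; the correct route is a normal-families argument plus closedness of $\mathcal{I}(c\varphi)_{o}$ under locally uniform limits, which forces $(f_{j_0},o)\in\mathcal{I}(c\varphi)_{o}$ if $G(0)=0$.

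The gap is more serious for the second quantity. The sets $\{c\varphi-\log|I|<-t\}$ are not sublevel sets of a plurisubharmonic function, so the concavity package does not apply as stated, and the layer-cake divergence you extract from non-integrability of $|I|^{2}e^{-2c\varphi}$ does not upgrade to the pointwise bound $\geq Cr^{2}$: a monotone $m(r)$ with $\int_{0}^{1}m(r)r^{-3}dr=+\infty$ may still satisfy $\liminf_{r\to0}m(r)/r^{2}=0$ (e.g.\ $m(r)=r^{2}/\log(1/r)$). You explicitly defer this step, together with ``carrying the ideal through the $\bar\partial$-estimates,'' to the cited papers, which is precisely the content that a proof must supply. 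For comparison, the published argument does not go through concavity at all: it applies the optimal $L^2$ extension theorem (with a suitably chosen gain $c_{A}$) on the sublevel sets to produce holomorphic functions $\tilde F_{t}$ with $(\tilde F_{t}-f_{j_0},o)\in\mathcal{I}(c\varphi)_{o}$ and $\int|\tilde F_{t}|^{2}\leq C\,e^{2t}\int_{\{c\varphi<-t\}}|f_{j_0}|^{2}$, so that failure of the lower bound along a sequence $t_{k}\to+\infty$ yields $\tilde F_{t_{k}}\to0$ and hence $(f_{j_0},o)\in\mathcal{I}(c\varphi)_{o}$, a contradiction with the choice of $f_{j_0}$ furnished by Theorem \ref{thm:open_GZ}; the second statement is obtained by keeping the factor $|I|^{2}$ inside the same extension step rather than by a separate concavity argument.
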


Dimension two case of the above theorem was proved by Jonsson and Mustata \cite{JM12} ($I$ is trivial see \cite{FM05j}).
The proof of the general case is based on our solution of Demailly's strong openness conjecture \cite{GZopen-a} and our solution of the $L^{2}$ extension problem with optimal estimates \cite{guan-zhou13p,guan-zhou13ap}.

In the present note,
we deduce that Theorem \ref{thm:JM_GZ} implies the the following twisted version of Demailly's strong openness conjecture.

\begin{theorem}
\label{prop:twist_open}
Let $a(t)$ be a positive measurable function on $(-\infty,+\infty)$
such that $a(t)e^{t}$ is strictly increasing and continuous near $+\infty$.
Then the following three statements are equivalent

$(A)$ $a(t)$ is not integrable near $+\infty$;

$(B)$ $a(-2c^{I}_{o}(\varphi)\varphi)\exp{(-2c^{I}_{o}(\varphi)\varphi+2\log|I|)}$ is not integrable near $o$ for any $\varphi$ and $|I|$
satisfying $c^{I}_{o}(\varphi)<+\infty$;

$(C)$ $a(-2c^{I}_{o}(\varphi)\varphi+2\log|I|)\exp{(-2c^{I}_{o}(\varphi)\varphi+2\log|I|)}$ is not integrable near $o$ for any $\varphi$ and $|I|$
satisfying $c^{I}_{o}(\varphi)<+\infty$.
\end{theorem}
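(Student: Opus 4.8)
The plan is to prove $(A)\Rightarrow(B)$ and $(A)\Rightarrow(C)$ directly from Theorem \ref{thm:JM_GZ}, and to deduce the converses $\neg(A)\Rightarrow\neg(B)$, $\neg(A)\Rightarrow\neg(C)$ from one explicit example; together these give the three-way equivalence. Throughout put $c:=c^{I}_{o}(\varphi)$, which is finite by hypothesis and positive (for instance $c\geq c_{o}(\varphi)\geq 1/\nu(\varphi,o)>0$ by Skoda's theorem, $\nu(\varphi,o)$ being finite because $c<+\infty$), and set $b(t):=a(t)e^{t}$. By hypothesis $b$ is strictly increasing and continuous, hence non-decreasing, on some interval $[T_{1},+\infty)$ with $T_{1}>0$; thus $db$ is a positive Lebesgue--Stieltjes measure on $[T_{1},+\infty)$ and $a(t)=b(t)e^{-t}$ there.

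For $(A)\Rightarrow(B)$, let $U$ be any sufficiently small neighborhood of $o$ on which Theorem \ref{thm:JM_GZ} applies and $|I|^{2}$ is bounded. The substitution $r=e^{-t/2}$ sends $(0,1)$ onto $(0,+\infty)$ and $\{c\varphi<\log r\}$ onto $\{-2c\varphi>t\}$, so the first estimate of Theorem \ref{thm:JM_GZ} becomes
\[
\int_{U}|I|^{2}\chi_{\{-2c\varphi>t\}}\,d\lambda_{n}\ \geq\ \delta e^{-t}\qquad(t>0)
\]
for some $\delta>0$. On $\{-2c\varphi>T_{1}\}$ the integrand in $(B)$ equals $b(-2c\varphi)|I|^{2}$, and there $b(-2c\varphi)\geq\int_{T_{1}}^{-2c\varphi}db$ (as $b$ is non-decreasing with $b(T_{1})\geq0$). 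Restricting the integral to this set and applying Tonelli's theorem together with the displayed estimate,
\[
\int_{U}a(-2c\varphi)e^{-2c\varphi}|I|^{2}\,d\lambda_{n}\ \geq\ \int_{T_{1}}^{+\infty}\Big(\int_{U}|I|^{2}\chi_{\{-2c\varphi>t\}}\,d\lambda_{n}\Big)db(t)\ \geq\ \delta\int_{T_{1}}^{+\infty}e^{-t}\,db(t).
\]
Since $e^{-S}b(S)=a(S)\geq0$, integration by parts gives $\int_{T_{1}}^{S}e^{-t}\,db(t)=a(S)-e^{-T_{1}}b(T_{1})+\int_{T_{1}}^{S}a(t)\,dt\geq\int_{T_{1}}^{S}a(t)\,dt-e^{-T_{1}}b(T_{1})$ for all $S>T_{1}$, whence $\int_{T_{1}}^{+\infty}e^{-t}\,db(t)\geq\int_{T_{1}}^{+\infty}a(t)\,dt-e^{-T_{1}}b(T_{1})=+\infty$ by $(A)$. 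As the integral diverges for every such $U$, the integrand in $(B)$ is not integrable near $o$.

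The implication $(A)\Rightarrow(C)$ is the same computation with $-2c\varphi$ replaced by $\psi:=-2c\varphi+2\log|I|$ and the first estimate of Theorem \ref{thm:JM_GZ} replaced by the second: under $r=e^{-t/2}$ one has $\{c\varphi-\log|I|<\log r\}=\{\psi>t\}$, so $\int_{U}\chi_{\{\psi>t\}}\,d\lambda_{n}\geq\delta'e^{-t}$; the integrand in $(C)$ equals $a(\psi)e^{\psi}=b(\psi)$ on $\{\psi>T_{1}\}$ (note $e^{\psi}=e^{-2c\varphi}|I|^{2}$); and the identical layer-cake and integration-by-parts steps force divergence. For the converses, take $X=\Delta^{n}$ with $o$ its center, $\varphi=\log|z_{1}|$ and $I=\mathcal{O}_{\Delta^{n},o}$, so that $|I|\equiv1$ and $c^{I}_{o}(\varphi)=c_{o}(\varphi)=1<+\infty$; then both integrands in $(B)$ and $(C)$ reduce to $a(-2\log|z_{1}|)|z_{1}|^{-2}$, and polar coordinates in $z_{1}$ with $t=-2\log|z_{1}|$ give $\int_{\Delta^{n}_{\rho}}a(-2\log|z_{1}|)|z_{1}|^{-2}\,d\lambda_{n}=C_{\rho}\int_{-2\log\rho}^{+\infty}a(t)\,dt$ with $C_{\rho}>0$. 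If $(A)$ fails, i.e.\ $a$ is integrable near $+\infty$, this is finite for $\rho$ small, so the function in question is integrable on a neighborhood of $o$; hence $\neg(A)\Rightarrow\neg(B)$ and $\neg(A)\Rightarrow\neg(C)$, and the equivalence follows.

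I expect the delicate point to be the layer-cake interchange together with the right form of the integration by parts: Theorem \ref{thm:JM_GZ} yields only lower bounds for the distribution functions $\int_{U}|I|^{2}\chi_{\{-2c\varphi>t\}}$ and $\int_{U}\chi_{\{\psi>t\}}$, so one must unfold $b$ through $db$; integrating $\int b\,d(\text{distribution function})$ by parts instead would produce an uncontrolled boundary term $\lim_{t\to+\infty}b(t)\cdot(\text{distribution function})$. Written as above, the only boundary contribution at $+\infty$ is $a(S)\geq0$, which is harmless, so the sole properties of $a$ actually used for $(A)\Rightarrow(B),(C)$ are the monotonicity of $t\mapsto a(t)e^{t}$ near $+\infty$ and the identity $a(t)=b(t)e^{-t}$ there (continuity of $b$ is not needed). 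A minor bookkeeping point is that ``not integrable near $o$'' is unaffected by which small neighborhood $U$ one uses in Theorem \ref{thm:JM_GZ}, since shrinking $U$ only decreases all the integrals above while Theorem \ref{thm:JM_GZ} is local at $o$.
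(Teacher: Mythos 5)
Your proof is correct and follows essentially the same route as the paper: the converse directions use the same model example $\varphi=\log|z_{1}|$ with $I$ trivial and polar coordinates, and the forward directions rest on the same lower bounds for the distribution functions supplied by Theorem \ref{thm:JM_GZ}. The only difference is how that bound is converted into divergence of the integral: the paper compares against an explicit model function on $(0,1]$ via Lemma \ref{lem:integ}, while you unfold $a(t)e^{t}$ through the Stieltjes measure $db$ with Tonelli and integrate by parts --- two equivalent implementations of the same layer-cake step.
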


When one takes $a\equiv1$, both of $B$ and $C$ degenerate to strong openness of multiplier ideal sheaves.

When one takes $a(t)=\frac{1}{t^{\alpha}}$ and $\alpha>0$, it is clear that $B$ holds if and only if $\alpha\in(0,1]$,
i.e.,
$\frac{1}{|\varphi|^{\alpha}}|I^{2}|e^{-2c^{I}_{o}(\varphi)\varphi}$ is not locally integrable if and only if $\alpha\in(0,1]$
(the case when $I=\mathcal{O}$ was expected in \cite{Chen2016}).

\subsection{The optimal $L^{2}$ extension and applications}
Now let's recall some notations in \cite{ohsawa5}. Let $M$ be a
complex $n-$dimensional manifold, and $S$ be a closed complex
subvariety of $M$. Let $dV_{M}$ be a continuous volume form on $M$.
We consider a class of upper-semi-continuous function $\Psi$ from
$M$ to the interval $[-\infty,A)$, where $A\in(-\infty,+\infty]$,
such that

$1)$ $\Psi^{-1}(-\infty)\supset S$, and $\Psi^{-1}(-\infty)$ is a closed subset of $M$;

$2)$ If $S$ is $l-$dimensional around a point $x\in S_{reg}$
($S_{reg}$ is the regular part of $S$), there exists a local
coordinate $(z_{1},\cdots,z_{n})$ on a neighborhood $U$ of $x$ such
that $z_{l+1}=\cdots=z_{n}=0$ on $S\cap U$ and
$$\sup_{U\setminus S}|\Psi(z)-(n-l)\log\sum_{l+1}^{n}|z_{j}|^{2}|<\infty.$$

The set of such polar functions $\Psi$ will be denoted by
$\#_{A}(S)$.

For each $\Psi\in\#_{A}(S)$, one can associate a positive measure
$dV_{M}[\Psi]$ on $S_{reg}$ as the minimum element of the partially
ordered set of positive measures $d\mu$ satisfying

$$\int_{S_{l}}fd\mu\geq\limsup_{t\to\infty}\frac{2(n-l)}
{\sigma_{2n-2l-1}}\int_{M}fe^{-\Psi}\mathbb{I}_{\{-1-t<\Psi<-t\}}dV_{M}$$
for any nonnegative continuous function $f$ with $supp
f\subset\subset M$, where $\mathbb{I}_{\{-1-t<\Psi<-t\}}$ is the
characteristic function of the set $\{-1-t<\Psi<-t\}$. Here denote
by $S_{l}$ the $l$-dimensional component of $S_{reg}$, denote by
$\sigma_{m}$ the volume of the unit sphere in $\mathbb{R}^{m+1}$.

Let $M$ be a complex manifold with a continuous volume form
$dV_{M}$, and $S$ be a closed complex subvariety of $M$.
We call a pair
$(M,S)$ is an almost Stein pair if $M$ and $S$ satisfy the
following conditions:

There exists a closed subset $X\subset M$ such that:

$(a)$ $X$ is locally negligible with respect to $L^2$ holomorphic
functions, i.e., for any local coordinate neighborhood $U\subset M$
and for any $L^2$ holomorphic function $f$ on $U\setminus X$, there
exists an $L^2$ holomorphic function $\tilde{f}$ on $U$ such that
$\tilde{f}|_{U\setminus X}=f$ with the same $L^{2}$ norm.

$(b)$ $M\setminus X$ is a Stein manifold which intersects with every component of $S$,
such that $S_{sing}\subset X$.

Given $\delta>0$, let $c_{A}(t)$ be a positive function on
$(-A,+\infty)$ $(A\in(-\infty,+\infty))$, which is in
$C^{\infty}((-A,+\infty))$ and satisfies both
$\int_{-A}^{\infty}c_{A}(t)e^{-t}dt<\infty$ and
\begin{equation}
\label{equ:c_A_delta}
\begin{split}
&(\frac{1}{\delta}c_{A}(-A)e^{A}+\int_{-A}^{t}c_{A}(t_{1})e^{-t_{1}}dt_{1})^{2}>\\&c_{A}(t)e^{-t}
(\int_{-A}^{t}(\frac{1}{\delta}c_{A}(-A)e^{A}+\int_{-A}^{t_{2}}c_{A}(t_{1})e^{-t_{1}}dt_{1})
dt_{2}+\frac{1}{\delta^{2}}c_{A}(-A)e^{A}),
\end{split}
\end{equation}
for any $t\in(-A,+\infty)$.

 An easy example of such functions $c_{A}(t)$ is when $c_{A}(t)e^{-t}$ is decreasing with respect to $t$. We have given several interesting  more examples in \cite{guan-zhou13ap} to solve several open questions. We remark that the solutions of the questions are reduced to the choice of such functions $c_{A}(t)$.

In \cite{guan-zhou13ap}, we establish the following optimal $L^{2}$ extension theorem as follows:

\begin{theorem}\label{t:guan-zhou-unify}\cite{guan-zhou13ap}
Let $(M,S)$ be an almost Stein pair,
$h$ be a smooth metric on a holomorphic vector bundle $E$ on $M$ with rank $r$.
Let $\Psi\in \#_{A}(S)\cap C^{\infty}(M\setminus S)$, which satisfies

1), $he^{-\Psi}$ is semi-positive in the sense of Nakano on
$M\setminus (S\cup X)$ ($X$ is as in the definition of condition
$(a,b)$),

2), there exists a continuous function $a(t)$ on $(-A,+\infty]$,
such that $0<a(t)\leq s(t)$ and
$a(-\Psi)\sqrt{-1}\Theta_{he^{-\Psi}}+\sqrt{-1}\partial\bar\partial\Psi$
is semi-positive in the sense of Nakano on $M\setminus (S\cup X)$,
where $$s(t)=\frac{\int_{-A}^{t}(\frac{1}
{\delta}c_{A}(-A)e^{A}+\int_{-A}^{t_{2}}c_{A}(t_{1})e^{-t_{1}}dt_{1})dt_{2}+\frac{1}{\delta^{2}}c_{A}(-A)e^{A}}
{\frac{1}{\delta}c_{A}(-A)e^{A}+\int_{-A}^{t}c_{A}(t_{1})e^{-t_{1}}dt_{1}}.$$
Then for any holomorphic section $f$ of $K_{M}\otimes
E|_{S}$ on $S$ satisfying
\begin{equation}
\label{equ:condition}
\sum_{k=1}^{n}\frac{\pi^{k}}{k!}\int_{S_{n-k}}|f|^{2}_{h}dV_{M}[\Psi]<\infty,
\end{equation}
there
exists a holomorphic section $F$ of $K_{M}\otimes E$ on $M$ satisfying $F = f$ on $ S$ and
\begin{equation}
\label{equ:optimal_delta}
\int_{M}c_{A}(-\Psi)|F|^{2}_{h}dV_{M}
\leq\mathbf{C}(\frac{1}{\delta}c_{A}(-A)e^{A}+\int_{-A}^{\infty}c_{A}(t)e^{-t}dt)
\sum_{k=1}^{n}\frac{\pi^{k}}{k!}\int_{S_{n-k}}|f|^{2}_{h}dV_{M}[\Psi],
\end{equation}
where $c_{A}(t)$ satisfies $c_{A}(-A)e^{A}:=\lim_{t\to -A^{+}}c_{A}(t)e^{-t}<\infty$ and $c_{A}(-A)e^{A}\neq0$,
$\mathbf{C}=1$ (which is optimal).
\end{theorem}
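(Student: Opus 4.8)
The plan is to prove Theorem~\ref{t:guan-zhou-unify} by the twisted $\bar\partial$-technique descending from Ohsawa--Takegoshi, Siu, Berndtsson and B{\l}ocki, with the auxiliary functions tuned so precisely against \eqref{equ:c_A_delta} that the final constant is exactly $\mathbf{C}=1$. I would first reduce to the model situation. Condition $(a)$ lets one build $F$ only over $M\setminus X$, which is Stein by $(b)$; exhausting $M\setminus X$ by relatively compact Stein subdomains $M_{1}\Subset M_{2}\Subset\cdots$ and taking a normal-family limit at the end reduces matters to each $M_{j}$, on which $S$ is smooth, $E$ is trivial, and the curvature hypotheses $1)$, $2)$ persist. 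One also regularizes $\Psi$ by $\max\{\Psi,-\nu\}$ (smoothed), $\nu\to\infty$, retaining the exact logarithmic pole $\Psi\sim(n-l)\log\sum_{l+1}^{n}|z_{j}|^{2}$ along $S$, which is what makes the residue measure $dV_{M}[\Psi]$ and the restriction map $F\mapsto F|_{S}$ behave correctly.

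For the core $\bar\partial$-construction, extend $f$ to a smooth (non-holomorphic) section $\tilde F$ of $K_{M}\otimes E$ on $M_{j}$ supported in a tube around $S$, and for large $t$ truncate it with a cutoff $\chi_{t}$ of the sublevel sets of $\Psi$, its profile chosen adapted to $c_{A}$, so that $v_{t}:=\bar\partial\bigl(\chi_{t}(\Psi)\tilde F\bigr)$ is smooth, $\bar\partial$-closed, and supported in a thin band $\{-t-1<\Psi<-t\}$ on which $e^{-\Psi}\approx e^{t}$. Then solve $\bar\partial u_{t}=v_{t}$ on $M_{j}$ against the modified metric $h\,c_{A}(-\Psi)e^{-\Psi}$, via the twisted Bochner--Kodaira--Nakano inequality
\begin{equation*}
\int\Bigl(\eta+\tfrac1\lambda\Bigr)|\bar\partial^{*}\alpha|^{2}+\int\eta\,|\bar\partial\alpha|^{2}\ \geq\ \int\bigl\langle\bigl[\eta\sqrt{-1}\Theta-\sqrt{-1}\partial\bar\partial\eta-\sqrt{-1}\lambda\,\partial\eta\wedge\bar\partial\eta,\ \Lambda\bigr]\alpha,\alpha\bigr\rangle,
\end{equation*}
where $\eta=\eta(-\Psi)$ is the numerator of $s(t)$ (so $\eta'$ is its denominator and $\eta''(t)=c_{A}(t)e^{-t}$), and $\lambda=\lambda(-\Psi)>0$ is chosen so that the bracket stays $\geq0$ on $M_{j}\setminus S$; this is where \eqref{equ:c_A_delta} enters, being precisely the inequality $(\eta')^{2}>\eta''\eta$, equivalently the strict monotonicity of $s$. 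Hypothesis $1)$ neutralizes the bad sign of $\sqrt{-1}\Theta_{he^{-\Psi}}$ inside $\sqrt{-1}\Theta$ for the modified metric; hypothesis $2)$, used through $a(t)\leq s(t)$, lets the Hessian term $\sqrt{-1}\partial\bar\partial\eta$ be absorbed; and the leftover scalar terms reorganize into a pointwise lower bound $c_{A}(-\Psi)e^{-\Psi}\times(\text{a positive density})$. The H\"ormander-type solution of $\bar\partial$ then gives $u_{t}$ with $\int_{M_{j}}c_{A}(-\Psi)|u_{t}|^{2}_{h}\,dV_{M}\leq\int_{M_{j}}(\eta+1/\lambda)^{-1}|v_{t}|^{2}_{h}\,e^{-\Psi}\,dV_{M}$, and since $v_{t}$ lives on $\{-t-1<\Psi<-t\}$ the right side is at most $(1+o(1))\bigl(\tfrac1\delta c_{A}(-A)e^{A}+\int_{-A}^{\infty}c_{A}(t)e^{-t}\,dt\bigr)\int_{\{-t-1<\Psi<-t\}}|\tilde F|^{2}_{h}\,e^{-\Psi}\,dV_{M}$.

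To finish, pass to the limit $t\to\infty$. By the definition of $dV_{M}[\Psi]$ and the normal form of $\Psi$, the band integral converges to $\sum_{k=1}^{n}\frac{\pi^{k}}{k!}\int_{S_{n-k}}|f|^{2}_{h}\,dV_{M}[\Psi]$ (the $\pi^{k}/k!$ cancelling the $\sigma_{2k-1}$ constants in the residue). The uniform bound on $\int c_{A}(-\Psi)|u_{t}|^{2}_{h}\,dV_{M}$ yields, along a subsequence, a weak $L^{2}_{loc}(M_{j}\setminus S)$ limit $u$; then $F_{j}:=\tilde F-u$ is holomorphic, obeys \eqref{equ:optimal_delta} on $M_{j}$, and $F_{j}-\tilde F=-u$ is locally square-integrable against $e^{-\Psi}$ near $S$, hence vanishes on $S$ to the order forced by the pole of $\Psi$, so $F_{j}|_{S}=\tilde F|_{S}=f$. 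Letting $j\to\infty$ with the uniform estimate, extracting a locally uniform limit $F$ on $M\setminus X$, and extending across $X$ by $(a)$, gives $F$ on $M$ with $F=f$ on $S$ and \eqref{equ:optimal_delta}; one checks that no constant is lost when the regularizations of $\Psi$ and $c_{A}$ are removed, and, separately, that \eqref{equ:condition} guarantees the traces on all components $S_{n-k}$ are accounted for.

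The main obstacle is the optimal constant $\mathbf{C}=1$: any workable $(\eta,\lambda)$ gives \emph{some} finite constant, and the force of \eqref{equ:c_A_delta} is that the pair above makes $\eta+1/\lambda$ just large enough for the curvature bracket to stay $\geq0$ yet just small enough that $\int(\eta+1/\lambda)^{-1}|v_{t}|^{2}e^{-\Psi}$ loses nothing in the limit beyond the factor $\tfrac1\delta c_{A}(-A)e^{A}+\int_{-A}^{\infty}c_{A}(t)e^{-t}\,dt$; converting the pointwise differential inequality \eqref{equ:c_A_delta} into this sharp integral bound, and checking that its strictness evaporates in the limit, is the crux. A second delicate point is the region $\Psi\to A^{-}$, far from $S$, where the boundary value $c_{A}(-A)e^{A}$ and the parameter $\delta$ enter: one truncates $-\Psi$ from below near $-A$, and the $\tfrac1\delta$, $\tfrac1{\delta^{2}}$ terms in $\eta$ and in \eqref{equ:c_A_delta} are exactly what makes the estimate close uniformly in that truncation. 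The passage from singular $S$ and from $X$ to the smooth Stein model, and the proof that $dV_{M}[\Psi]$ is genuinely the minimal dominating measure, are routine but must be done so as not to leak constants.
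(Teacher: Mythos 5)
This note does not itself prove Theorem \ref{t:guan-zhou-unify}; it is quoted from \cite{guan-zhou13ap}, so your attempt must be compared with the proof there. Your strategy is indeed the same one: reduction to relatively compact Stein pieces of $M\setminus X$, a cut-off $\chi_{t}(\Psi)$ applied to a local extension $\tilde F$, a twisted Bochner--Kodaira--Nakano inequality with auxiliary functions of $-\Psi$ manufactured from $c_{A}$, and a limit $t\to\infty$ in which the band integrals reproduce $dV_{M}[\Psi]$. You have also correctly recognized that \eqref{equ:c_A_delta} is exactly $(\eta')^{2}>\eta\,\eta''$ for $\eta$ the numerator of $s$, with $\eta''(t)=c_{A}(t)e^{-t}$, i.e.\ the strict monotonicity of $s$.

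The gap is that the decisive part of the argument is only announced, and in the one place where you write it down it is not correct as stated. In the twisted a priori estimate the factor $(\eta+\lambda^{-1})^{-1}$ occurs as the weight on the solution $u_{t}$, while $v_{t}$ must be paired against the inverse of the curvature operator $[\eta\sqrt{-1}\Theta-\sqrt{-1}\partial\bar\partial\eta-\sqrt{-1}\lambda\,\partial\eta\wedge\bar\partial\eta,\Lambda]$; your displayed estimate instead puts $(\eta+1/\lambda)^{-1}$ against $|v_{t}|^{2}$, and this is precisely where a non-optimal constant would leak in. What the proof in \cite{guan-zhou13ap} actually accomplishes, and what is missing here, is the simultaneous verification of two pointwise conditions: the bracket must dominate a positive multiple of $\sqrt{-1}\partial\Psi\wedge\bar\partial\Psi$ strong enough that the contribution of $v_{t}=\bar\partial\chi_{t}\wedge\tilde F$ tends exactly to $(\tfrac{1}{\delta}c_{A}(-A)e^{A}+\int_{-A}^{\infty}c_{A}(t)e^{-t}dt)$ times the residue norm, and at the same time $(\eta+\lambda^{-1})^{-1}$, combined with the weights used to solve $\bar\partial$, must reproduce exactly $c_{A}(-\Psi)$ in the metric $h$ appearing in \eqref{equ:optimal_delta} (note hypothesis 1) is about $he^{-\Psi}$, while the conclusion carries no $e^{-\Psi}$, so this bookkeeping cannot be waved through). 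Showing that \eqref{equ:c_A_delta}, together with $a\le s$ from hypothesis 2), makes this ODE-type system solvable with no loss of constant is the theorem; you explicitly defer it as ``the crux'' and ``the main obstacle''. A further slip: regularizing by $\max\{\Psi,-\nu\}$ removes the logarithmic pole along $S$, contradicting your own statement that the pole is retained; the truncation in the reference is at the other end, near $-A$, which is exactly where $\delta$ and $c_{A}(-A)e^{A}$ enter.
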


The optimal $L^2$ extension theorem (Theorem \ref{t:guan-zhou-unify} \cite{guan-zhou13ap}) gives unified optimal estimate versions
of various well-known $L^2$ extension theorems in \cite{ohsawa2,manivel93,demailly99,siu00,paun07,berndtsson09,mcneal-varolin,D-P}, etc.
Some interesting relations between the optimal $L^2$ extension and some questions are found, so that the questions are solved in \cite{guan-zhou13ap} by using optimal $L^2$ extension (Theorem \ref{t:guan-zhou-unify}),
such as Suita's conjecture (see \cite{suita} \cite{sario}), L-conjecture (see \cite{yamada98}),
extended Suita conjecture (see \cite{yamada98}), and an open question posed by Ohsawa (see \cite{ohsawa6}), etc.

Let $c_{\infty}(t)$ be a positive function in $C^{\infty}((-\infty,+\infty))$ satisfying
$\int_{-\infty}^{\infty}c_{A}(t)e^{-t}dt<\infty$ and
\begin{equation}
\label{equ:c_A}
\big(\int_{-\infty}^{t}c_{\infty}(t_{1})e^{-t_{1}}dt_{1}\big)^{2}>c_{\infty}(t)e^{-t}
\int_{-\infty}^{t}\int_{-\infty}^{t_{2}}c_{\infty}(t_{1})e^{-t_{1}}dt_{1}dt_{2},
\end{equation}
for any $t\in(-\infty,+\infty)$. This class of functions will be denoted by $\mathcal{C}_{\infty}$.

\begin{remark}\label{condition}(see \cite{guan-zhou13ap})
Assume that $\frac{d}{dt}c_{\infty}(t)e^{-t}>0$
for $t\in (-\infty,a)$,
$\frac{d}{dt}c_{\infty}(t)e^{-t}\leq0$
for $t\in[a,+\infty)$,
and
$\frac{d^{2}}{dt^{2}}\log(c_{\infty}(t)e^{-t})<0$
for $t\in (-\infty,a)$,
where  $a>-\infty$.
Then inequality \ref{equ:c_A} holds.
\end{remark}

Especially, when we take $X$ be a pseudo-convex domain in $\mathbb{C}^{n}$ with coordinates
$(z_{1},\cdots,z_{n})$ and $\Psi=\log|z_{n}|$,
Theorem \ref{t:guan-zhou-unify} degenerates to

\begin{theorem}\label{t:guan-zhou-domain}\cite{guan-zhou13ap}
Let $D\subset\mathbb{C}^n$ be a pseudoconvex domain, $\varphi$ a plurisubharmonic
function on $D$ and $H=\{z_n=0\}$. Then for any holomorphic
function $f$ on $H$ satisfying
$$\int_{H}|f|^{2}e^{-\varphi}dV_{H}<\infty,$$
there exists a holomorphic function $F$ on $D$ satisfying $F = f$ on $H$ and
\begin{eqnarray*}
\int_{D}c_{\infty}(-2\log|z_n|)|F|^{2}e^{-\varphi}dV_{D}
\leq\mathbf{C}\pi\int_{-\infty}^{\infty}c_{\infty}(t)e^{-t}dt\int_{H}|f|^{2}e^{-\varphi}dV_{H},
\end{eqnarray*}
where $\mathbf{C}=1$ (which is optimal).
\end{theorem}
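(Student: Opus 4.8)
The plan is to derive Theorem~\ref{t:guan-zhou-domain} from Theorem~\ref{t:guan-zhou-unify} by specializing the latter to $M=D$, $S=H=\{z_n=0\}\cap D$, $X=\varnothing$, $E$ the trivial line bundle with metric $h=e^{-\varphi}$, and $\Psi=\log|z_n|^2=2\log|z_n|$ (the factor $2$ is forced by condition 2) in the definition of $\#_A(S)$, since $\operatorname{codim}H=1$). First I would carry out two standard reductions. Exhausting $D$ by the relatively compact pseudoconvex subdomains $D_j=D\cap\{|z|<j\}$, solving the problem on each $D_j$, and passing to a locally uniform subsequential limit by Montel's theorem, I may assume $D$ is bounded, so that $A_0:=\sup_D 2\log|z_n|<+\infty$ and every $A>A_0$ is admissible. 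On each bounded piece $\varphi$ is the decreasing limit of its smooth plurisubharmonic convolutions $\varphi*\rho_\varepsilon$; solving with $\varphi*\rho_\varepsilon$ and letting $\varepsilon\to0$ (the upper bound passing by monotonicity, the limit of sections by Montel/Fatou) lets me further assume $\varphi\in C^\infty(D)$, which Theorem~\ref{t:guan-zhou-unify} requires. With these reductions, $(D,H)$ is an almost Stein pair with $X=\varnothing$ ($D$ is Stein and $H$ is smooth, so $H_{sing}=\varnothing$) and $\Psi\in\#_A(H)\cap C^\infty(D\setminus H)$.

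Next I would verify the two curvature hypotheses of Theorem~\ref{t:guan-zhou-unify}; here the choice $\Psi=\log|z_n|^2$ is decisive. Because $\log|z_n|$ is pluriharmonic on $D\setminus H$, one has on $D\setminus H$
\[
\sqrt{-1}\,\Theta_{he^{-\Psi}}=\sqrt{-1}\,\partial\bar\partial\big(\varphi+2\log|z_n|\big)=\sqrt{-1}\,\partial\bar\partial\varphi\ge 0,
\]
which for a line bundle is Nakano semi-positivity, so 1) holds; and $\sqrt{-1}\,\partial\bar\partial\Psi=0$ there, so $a(-\Psi)\sqrt{-1}\Theta_{he^{-\Psi}}+\sqrt{-1}\partial\bar\partial\Psi=a(-\Psi)\sqrt{-1}\partial\bar\partial\varphi\ge 0$ for every positive $a$; since the function $s$ appearing in 2) is bounded below by a positive constant, one may take $a$ equal to such a constant, so 2) holds as well.

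Then I would compute the measure $dV_M[\Psi]$ and the numerical constants entering \eqref{equ:condition}--\eqref{equ:optimal_delta}. With $l=\dim H=n-1$ one has $n-l=1$ and $\sigma_{2n-2l-1}=\sigma_1=2\pi$, so $\tfrac{2(n-l)}{\sigma_{2n-2l-1}}=\tfrac1\pi$; writing $z_n=\rho e^{\sqrt{-1}\theta}$, the level set $\{-1-t<\Psi<-t\}$ is the annulus $\{e^{-(t+1)/2}<|z_n|<e^{-t/2}\}$, on which $\int\rho^{-1}d\rho=\tfrac12$ and $\int_0^{2\pi}d\theta=2\pi$, so Fubini gives $\limsup_{t\to\infty}\int_D f e^{-\Psi}\mathbb{I}_{\{-1-t<\Psi<-t\}}dV_D=\pi\int_H f\,dV_H$ for continuous compactly supported $f$, whence $dV_M[\Psi]=dV_H$ on $H=S_{n-1}$. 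Consequently only the $k=1$ term of $\sum_{k=1}^n\tfrac{\pi^k}{k!}\int_{S_{n-k}}|f|_h^2\,dV_M[\Psi]$ survives, and it is equal to $\pi\int_H|f|^2e^{-\varphi}dV_H$; thus \eqref{equ:condition} reduces to $\int_H|f|^2e^{-\varphi}dV_H<\infty$, and \eqref{equ:optimal_delta} reads
\[
\int_D c_A(-2\log|z_n|)\,|F|^2e^{-\varphi}\,dV_D\ \le\ \Big(\tfrac1\delta c_A(-A)e^A+\int_{-A}^{\infty}c_A(t)e^{-t}\,dt\Big)\,\pi\!\int_H|f|^2e^{-\varphi}\,dV_H
\]
with $\mathbf C=1$.

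The remaining step, which I expect to be the main obstacle, is to select for each large $A$ admissible data $(c_A,\delta_A)$ with $c_A(-2\log|z_n|)=c_\infty(-2\log|z_n|)$ throughout $D$ and with the bracketed constant tending to $\int_{-\infty}^{\infty}c_\infty(t)e^{-t}dt$ as $A\to+\infty$. Since on $D$ the argument $-2\log|z_n|$ only runs over $(-A_0,+\infty)$, I am free to prescribe $c_A=c_\infty$ on $(-A_0,+\infty)$ and to shape $c_A$ on $(-A,-A_0]$ as convenient; I would arrange $\tfrac1\delta c_A(-A)e^A\to 0$ and that $g_A(t):=\tfrac1\delta c_A(-A)e^A+\int_{-A}^t c_A e^{-s}ds$ agrees in the limit with $g(t):=\int_{-\infty}^t c_\infty e^{-s}ds$ on $(-A_0,+\infty)$, and then verify the convexity-type inequality \eqref{equ:c_A_delta}. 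The key point is that the inequality \eqref{equ:c_A} defining $\mathcal C_\infty$ is precisely the $A=+\infty$ degeneration of \eqref{equ:c_A_delta}, so a construction in the spirit of Remark~\ref{condition} produces such $(c_A,\delta_A)$; this finite-$A$ realization of the optimal constant in the limit is the delicate part. Once it is in hand, the specialized Theorem~\ref{t:guan-zhou-unify} yields holomorphic extensions $F_A$ of $f$ on $D$ with $\int_D c_\infty(-2\log|z_n|)|F_A|^2e^{-\varphi}dV_D\le\pi\big(\int_{-\infty}^{\infty}c_\infty e^{-t}dt+o(1)\big)\int_H|f|^2e^{-\varphi}dV_H$; a Montel/Fatou argument extracts a holomorphic $F$ on $D$ with $F=f$ on $H$ and the estimate with $\mathbf C=1$, and undoing the two initial reductions completes the proof, optimality of $\mathbf C=1$ being inherited from Theorem~\ref{t:guan-zhou-unify} (or seen directly on a disc with $\varphi\equiv0$). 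The curvature verification, the measure computation, and the monotone-convergence bookkeeping are routine.
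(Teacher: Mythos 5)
Your plan follows exactly the route the paper itself indicates: the paper offers no proof of Theorem~\ref{t:guan-zhou-domain} beyond the remark that Theorem~\ref{t:guan-zhou-unify} ``degenerates to'' it under the specialization $M=D$, $S=H$, $h=e^{-\varphi}$, $\Psi=\log|z_n|^2$ (you correctly supply the factor $2$ that the paper's text omits), and defers the details to \cite{guan-zhou13ap}. Your curvature verification, the computation $dV_M[\Psi]=dV_H$ with the constant $\pi$, and the limiting choice of $(c_A,\delta)$ realizing $\mathbf{C}=1$ as $A\to+\infty$ are all consistent with that reference, so the proposal is correct and essentially the same argument.
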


Let $c_{\infty}(t)=e^{-\alpha e^{-t}}$ in Theorem \ref{t:guan-zhou-domain}.
By Remark \ref{condition}, it suffices to find $a\in\mathbb{R}$ such that
\
$(A)$ $\frac{d}{dt}(c_{\infty}(t)e^{-t})>0$ for $(-\infty,a)$;
\
$(B)$ $\frac{d}{dt}(c_{\infty}(t)e^{-t})\leq0$ for $[a,+\infty)$;
\
$(C)$ $\frac{d^2}{dt^2}\log(c_{\infty}(t)e^{-t})<0$ for $(-\infty,+\infty)$.

Denote by $G(t):=e^{-\alpha e^{-t}}e^{-t}=e^{-(\alpha e^{-t}+t)}$, where $\alpha >0$. Then,
$$G'(t)=-(\alpha e^{-t}+t)'G(t)=-(1-\alpha e^{-t})G(t)=(\alpha e^{-t}-1)G(t).$$
Take $a=\log\alpha$, then (A) and (B) holds. Note that
$$\log(c_{\infty}(t)e^{-t})=-(\alpha e^{-t}+t),\ \frac{d^2}{dt^2}(-\alpha(e^{-t}+t))=-\alpha e^{-t}<0$$
for $t\in(-\infty,+\infty)$, then (C) holds.
Then we obtain the following corollary of Theorem \ref{t:guan-zhou-domain}

\begin{corollary}
\label{coro:GZ-domain}
Let $D\subset\mathbb{C}^n$ be a pseudoconvex domain, and let $\varphi$ be a plurisubharmonic
function on $D$ and $H=\{z_n=0\}$. Then for any holomorphic
function $f$ on $H$ satisfying
$$\int_{H}|f|^{2}e^{-\varphi}dV_{H}<\infty,$$
there exists a holomorphic function $F$ on $D$ satisfying $F = f$ on $H$ and
\begin{eqnarray*}
\int_{D}e^{-\alpha|z_n|^2}|F|^{2}e^{-\varphi}dV_{D}
\leq\frac{\pi}{\alpha}\int_{H}|f|^{2}e^{-\varphi}dV_{H}.
\end{eqnarray*}
\end{corollary}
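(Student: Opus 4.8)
The plan is to obtain Corollary \ref{coro:GZ-domain} as the special case of Theorem \ref{t:guan-zhou-domain} corresponding to the weight $c_{\infty}(t)=e^{-\alpha e^{-t}}$, so that the whole task reduces to two routine verifications: that this $c_{\infty}$ belongs to the admissible class $\mathcal{C}_{\infty}$, and that the constant it produces equals $\pi/\alpha$.

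First I would check membership in $\mathcal{C}_{\infty}$ using Remark \ref{condition}. Setting $G(t):=c_{\infty}(t)e^{-t}=e^{-(\alpha e^{-t}+t)}$, a direct differentiation gives $G'(t)=(\alpha e^{-t}-1)G(t)$, which is positive for $t<\log\alpha$ and nonpositive for $t\geq\log\alpha$; hence conditions $(A)$ and $(B)$ of Remark \ref{condition} hold with the choice $a=\log\alpha$. Since $\log G(t)=-(\alpha e^{-t}+t)$ has second derivative $-\alpha e^{-t}<0$ on all of $(-\infty,+\infty)$, condition $(C)$ holds as well, so inequality \eqref{equ:c_A} is satisfied. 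It then remains to confirm the integrability hypothesis $\int_{-\infty}^{\infty}c_{\infty}(t)e^{-t}\,dt<\infty$, which I do in the next step.

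Next I would evaluate the normalizing integral. Under the substitution $s=e^{-t}$, so that $e^{-t}\,dt=-ds$ and the $t$-line maps onto $(0,\infty)$ with reversed orientation, one gets
$$\int_{-\infty}^{\infty}c_{\infty}(t)e^{-t}\,dt=\int_{-\infty}^{\infty}e^{-\alpha e^{-t}}e^{-t}\,dt=\int_{0}^{\infty}e^{-\alpha s}\,ds=\frac{1}{\alpha},$$
which is finite, so $c_{\infty}\in\mathcal{C}_{\infty}$.

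Finally I would apply Theorem \ref{t:guan-zhou-domain} with this $c_{\infty}$: for the given $f$ there is a holomorphic $F$ on $D$ with $F=f$ on $H$ and
$$\int_{D}c_{\infty}(-2\log|z_{n}|)|F|^{2}e^{-\varphi}\,dV_{D}\leq\mathbf{C}\,\pi\Bigl(\int_{-\infty}^{\infty}c_{\infty}(t)e^{-t}\,dt\Bigr)\int_{H}|f|^{2}e^{-\varphi}\,dV_{H},\qquad \mathbf{C}=1.$$
Since $c_{\infty}(-2\log|z_{n}|)=e^{-\alpha e^{2\log|z_{n}|}}=e^{-\alpha|z_{n}|^{2}}$ and the constant on the right equals $\pi/\alpha$ by the previous step, this is exactly the claimed inequality. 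I do not expect any genuine obstacle: the analytic substance is entirely contained in Theorem \ref{t:guan-zhou-domain}, and the only points requiring care are the monotonicity and convexity bookkeeping needed to invoke Remark \ref{condition} and the orientation in the change of variables $s=e^{-t}$.
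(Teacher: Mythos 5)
Your proposal is correct and follows essentially the same route as the paper: choosing $c_{\infty}(t)=e^{-\alpha e^{-t}}$, verifying conditions $(A)$, $(B)$, $(C)$ of Remark \ref{condition} with $a=\log\alpha$, and applying Theorem \ref{t:guan-zhou-domain}. You additionally spell out the computation $\int_{-\infty}^{\infty}c_{\infty}(t)e^{-t}\,dt=1/\alpha$ and the identity $c_{\infty}(-2\log|z_{n}|)=e^{-\alpha|z_{n}|^{2}}$, which the paper leaves implicit.
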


The above result was listed as an open problem by Ohsawa in Winter School of Sanya School in Complex Analysis and Geometry in 2016.
In his recent paper "On the extension of $L^2$ holomorphic functions VIII -- a remark on a theorem of Guan and Zhou", Ohsawa recognized that Corollary \ref{coro:GZ-domain} could be implied by the Main Theorem in [21].
\

Let $p$ be subjective family holomorphic map, from compact manifold $M$ to $\Delta$ (with coordinate $z$) with surjective
differential, and all the fibers $M_{z}=p^{-1}(z)$ are assumed to be compact.
Let $(L,h_{L})$ be a Hermitian line bundle on $M$.
One can define a holomorphic vector bundle $E$ over $\Delta$
with $E_{z}=H^{0}(M_{z},K_{M}|_{M_{z}}\otimes L|_{M_{z}})$,
and the $H^{0}(\Delta,E)=H^{0}(M,K_{M}\otimes L)$.
Let $u$ be a holomorphic section of $E$, and let $||u||:=||u||_{z}=\int_{M_{t}}|\frac{u}{dz}|^{2}_{h_{L}}$ as in \cite{berndtsson09},
which deduces a hermitian metric on $E$.

In \cite{berndtsson09},
Berndtsson establish the following positivity of vector bundles associated to holomorphic fibrations.

\begin{theorem}
\label{thm:bernd09}\cite{berndtsson09}
If the total space $M$ is K\"{a}hler and $L$ is (semi)positive over
$M$, then $(E, ||\cdot||)$ is (semi)positive in the sense of Nakano.
\end{theorem}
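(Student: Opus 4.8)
The plan is to reduce Theorem~\ref{thm:bernd09} (over the disc) to an \emph{optimal} $L^{2}$ extension property of $(E,\|\cdot\|)$ over subdiscs of $\Delta$, and then to establish that property by applying our optimal $L^{2}$ extension theorem on the total space. Since the base $\Delta$ is $1$-dimensional, Nakano (semi)positivity and Griffiths (semi)positivity of $(E,\|\cdot\|)$ coincide, and both are detected by the second-order behaviour of the smooth metric $h:=\|\cdot\|$ at each point. I claim it suffices to prove: for every $z_{0}\in\Delta$, every $a\in E_{z_{0}}$ and every $r>0$ with $\overline{\Delta(z_{0},r)}\subset\Delta$ (where $\Delta(z_{0},r)=\{|z-z_{0}|<r\}$), there is a holomorphic section $u$ of $E$ on $\Delta(z_{0},r)$ with $u(z_{0})=a$ and
$$\int_{\Delta(z_{0},r)}\|u\|^{2}_{z}\,d\lambda(z)\le\pi r^{2}\,\|a\|^{2}_{z_{0}},$$
$\pi r^{2}$ being the \emph{sharp} Ohsawa--Takegoshi constant for extension from a point of $\mathbb{C}$. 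Indeed, expanding $h$ in a holomorphic frame normal at $z_{0}$ and trying $u=a+c(z-z_{0})+\cdots$, one finds $\inf_{u(z_{0})=a}\int_{\Delta(z_{0},r)}\|u\|^{2}_{z}\,d\lambda=\pi r^{2}\|a\|^{2}_{z_{0}}+\tfrac{\pi r^{4}}{2}\langle(\partial_{z}\partial_{\bar z}h_{j\bar k})(z_{0})a,a\rangle+o(r^{4})$, the quadratic-in-$c$ contribution being minimized at $c=0$; hence the displayed bound, valid for all small $r$, forces $(\partial_{z}\partial_{\bar z}h_{j\bar k})(z_{0})\le0$, i.e. $(E,\|\cdot\|)$ is Nakano semipositive at $z_{0}$. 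This is the known characterization of Nakano semipositivity by optimal $L^{2}$ extension, and it is essential here that the constant be exactly $\pi r^{2}$ and not merely $O(r^{2})$. When $L$ is positive one gets, by the same argument with a strictly better extension estimate, strict positivity.

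To produce $u$, fix $z_{0}$, $r$ and put $M':=p^{-1}(\Delta(z_{0},r))$, an $n$-dimensional complex manifold fibered over $\Delta(z_{0},r)$ with compact fibers and central fiber $S:=M_{z_{0}}$. Although $M'$ is far from Stein, it is a weakly pseudoconvex K\"{a}hler manifold: it is K\"{a}hler because $M$ is, and $p^{*}\!\big(-\log(r^{2}-|z-z_{0}|^{2})\big)$ is a smooth plurisubharmonic exhaustion of $M'$ (its sublevel sets are the relatively compact sets $p^{-1}(\Delta(z_{0},\rho))$, $\rho<r$), so the optimal $L^{2}$ extension theorem in the form valid on weakly pseudoconvex K\"{a}hler manifolds (\cite{guan-zhou13ap}; cf.\ Theorem~\ref{t:guan-zhou-unify}) applies on $M'$. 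Take $\Psi:=\log|p-z_{0}|^{2}$; then $\Psi\in\#_{A}(S)\cap C^{\infty}(M'\setminus S)$ with $A=2\log r$, since $S$ is smooth ($p$ a submersion) and $\Psi$ is \emph{exactly} $(n-(n-1))\log|z_{n}|^{2}$ in a coordinate with $z_{n}=p-z_{0}$. Because $L$ is (semi)positive and $\Psi$ is pluriharmonic on $M'\setminus S$, the curvature hypotheses $1)$ and $2)$ of Theorem~\ref{t:guan-zhou-unify} hold for $(L,h_{L})$ on $M'$ with this $\Psi$ (for $2)$ take $a(t)\equiv s(t)$). Choosing $c_{A}\equiv1$ — admissible since $c_{A}(t)e^{-t}=e^{-t}$ is decreasing, $\int_{-A}^{\infty}e^{-t}dt=r^{2}<\infty$ and $\lim_{t\to-A^{+}}c_{A}(t)e^{-t}=r^{2}\neq0$ — applying the theorem and letting $\delta\to+\infty$ (a normal-families argument), we obtain: for the section $f$ of $(K_{M}\otimes L)|_{S}$ determined by $a$ under $(K_{M}\otimes L)|_{S}\cong K_{S}\otimes L|_{S}$ (via $dz$), a holomorphic section $F$ of $K_{M}\otimes L$ on $M'$ with $F|_{S}=f$ and $\int_{M'}|F|^{2}_{h_{L}}\le r^{2}\,\tfrac{\pi^{1}}{1!}\int_{S}|f|^{2}_{h_{L}}\,dV_{M'}[\Psi]$.

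It remains to interpret this back on $\Delta$. Via $K_{M}\cong K_{M/\Delta}$ (by $dz$) and $p_{*}(K_{M/\Delta}\otimes L)=E$, such an $F$ is exactly a holomorphic section $u$ of $E$ on $\Delta(z_{0},r)$, with $u(z_{0})=F|_{S}/dz=a$; Fubini over $\Delta(z_{0},r)$ together with $\|u\|^{2}_{z}=\int_{M_{z}}|u/dz|^{2}_{h_{L}}$ gives $\int_{M'}|F|^{2}_{h_{L}}=\int_{\Delta(z_{0},r)}\|u\|^{2}_{z}\,d\lambda(z)$, while the normalization of the minimal measure $dV_{M'}[\Psi]$ attached to $\Psi=\log|p-z_{0}|^{2}$ gives $\int_{S}|f|^{2}_{h_{L}}\,dV_{M'}[\Psi]=\|a\|^{2}_{z_{0}}$. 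Combining, $\int_{\Delta(z_{0},r)}\|u\|^{2}_{z}\,d\lambda(z)\le\pi r^{2}\|a\|^{2}_{z_{0}}$ (note $\tfrac{\pi^{1}}{1!}\int_{-A}^{\infty}e^{-t}dt=\pi e^{A}=\pi r^{2}$), which is the desired optimal extension bound; by the first paragraph, $(E,\|\cdot\|)$ is Nakano (semi)positive. The main obstacle is exactly the non-Steinness of the total space over a subdisc — circumvented by the observation that it is nonetheless weakly pseudoconvex and K\"{a}hler, which is the one place the K\"{a}hler hypothesis on $M$ enters — together with the bookkeeping that makes $dV_{M'}[\Psi]$, the factor $c_{A}(-\Psi)\equiv1$, and the constant $\tfrac{\pi^{1}}{1!}\int_{-A}^{\infty}e^{-t}dt$ conspire to the sharp value $\pi r^{2}$ matching the curvature expansion of the first paragraph.
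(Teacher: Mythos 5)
Your argument is essentially the paper's own: you reduce Nakano (semi)positivity of $(E,\|\cdot\|)$ to an optimal extension-from-a-point property over subdiscs (your Taylor-expansion criterion is exactly Lemma \ref{lem:Nakano}, proved the same way), and you produce the extension by applying Theorem \ref{t:guan-zhou-unify} on $p^{-1}(\Delta(z_0,r))$ with $c_A\equiv1$, $\Psi$ the pullback of the logarithm of the base coordinate, and the weakly pseudoconvex K\"ahler form of the theorem to handle the compact fibers — precisely the paper's choice $c_A(t)=1$, $A=0$, $\Psi=2p^*\log\frac{|z|}{r}$. Your brief remark on the strictly positive case corresponds to the paper's Lemma \ref{lem:strict_nakano} (twisting by $e^{\varepsilon|z|^2}$), so the two proofs coincide in all essentials.
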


In \cite{guan-zhou13ap}, we find that the optimal $L^2$ extension theorem (Theorem
\ref{t:guan-zhou-unify}) implies Berndtsson's theorem on
log-plurisubharmonicity of Bergman kernel \cite{berndtsson98,berndtsson06,berndtsson09}. We remark here that only the optimal estimate could do so.

In the present note, we will reveal that

\begin{proposition}
\label{prop:bern_positive}
Theorem \ref{t:guan-zhou-unify} implies Theorem \ref{thm:bernd09}.
\end{proposition}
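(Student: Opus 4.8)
The plan is to derive Berndtsson's Nakano-positivity statement (Theorem \ref{thm:bernd09}) from the optimal $L^2$ extension theorem (Theorem \ref{t:guan-zhou-unify}) by the same mechanism one uses to deduce log-plurisubharmonicity of Bergman kernels: the optimal constant $\mathbf{C}=1$ forces a curvature inequality via a second-order comparison. Concretely, Nakano semipositivity of $(E,\|\cdot\|)$ is a local statement on $\Delta$, and by a standard reduction it is equivalent to showing that for every point $z_{0}\in\Delta$ and every holomorphic local section $u$ of $E$ near $z_{0}$, the function $\log\|u\|^{2}$ (or rather the relevant expression involving $u$ and $\partial_{z}u$) is plurisubharmonic, and more precisely that the quadratic form $\langle\Theta^{E}u,u\rangle$ is nonnegative; since $\dim\Delta=1$ the Nakano and Griffiths notions coincide and positivity reduces to convexity of $t\mapsto\log\|u\|^{2}_{z}$ in a suitable sense plus a second-term estimate. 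First I would fix $z_{0}$ (say $z_{0}=0$), pick an element $u_{0}\in E_{0}=H^{0}(M_{0},K_{M_{0}}\otimes L|_{M_{0}})$, and normalize so that $\|u_{0}\|_{0}=1$.

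Next I would set up the extension problem: take $M$ itself as the ambient manifold, $S=M_{0}=p^{-1}(0)$ the central fiber (a closed complex submanifold, hence $(M,S)$ is an almost Stein pair after removing a negligible set, using that $M$ is compact Kähler so local $L^{2}$-extension is available), and $\Psi=\log|p|^{2}$ — more precisely $\Psi=\log|z\circ p|^{2}$ — which lies in $\#_{A}(S)$ with $A=+\infty$ (here $n-l=1$ since $S$ is a hypersurface). With $E$ the line bundle $p^{*}\mathcal{O}$-twist is not needed; I take the vector bundle in the theorem to be $L$ with its metric $h_{L}$, noting $K_{M}\otimes L|_{S}=K_{M_{0}}\otimes L|_{M_{0}}\otimes(\text{conormal})$, and the adjunction identifies sections of $K_{M}\otimes L$ restricted to $S$ with sections of $K_{M_{0}}\otimes L|_{M_{0}}$ — this is exactly where $dV_{M}[\Psi]$ reproduces Berndtsson's fiber integral $\|u\|_{0}$. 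The Nakano semipositivity hypothesis on $h_{L}e^{-\Psi}$ over $M\setminus S$ in condition 1) of Theorem \ref{t:guan-zhou-unify} follows from semipositivity of $L$ together with $\sqrt{-1}\partial\bar\partial\Psi=\sqrt{-1}\partial\bar\partial\log|z\circ p|^{2}\geq 0$ (it is the pullback of a current supported on $\{0\}$, hence $\geq 0$ away from $S$, in fact $=0$ there). For condition 2) one takes $a(t)\equiv$ a constant $\leq s(t)$; since $c_{A}$ can be chosen so that $s(t)$ is bounded below by a positive constant on $(-\infty,+\infty)$, and $a(-\Psi)\sqrt{-1}\Theta_{h_{L}e^{-\Psi}}+\sqrt{-1}\partial\bar\partial\Psi\geq 0$ again by (semi)positivity, condition 2) holds.

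Then I would run the extension: given $u_{0}$ on $S$ with unit fiber-norm, Theorem \ref{t:guan-zhou-unify} produces a holomorphic section $F$ of $K_{M}\otimes L$ on $M$ with $F|_{S}=u_{0}$ and
\begin{equation*}
\int_{M}c_{A}(-\Psi)|F|^{2}_{h_{L}}\,dV_{M}\leq\Big(\int_{-\infty}^{\infty}c_{A}(t)e^{-t}\,dt\Big)\cdot\sum_{k}\tfrac{\pi^{k}}{k!}\int_{S_{n-k}}|u_{0}|^{2}_{h_{L}}\,dV_{M}[\Psi]=\pi\,\|u_{0}\|^{2}_{0}\cdot\int_{-\infty}^{\infty}c_{A}(t)e^{-t}\,dt,
\end{equation*}
with optimal constant $1$. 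Now I specialize $c_{A}(t)=e^{-\alpha e^{-t}}$ as in Corollary \ref{coro:GZ-domain} (so that $c_{A}(-\Psi)=e^{-\alpha|z\circ p|^{2}}$ and $\int c_{A}(t)e^{-t}dt=1/\alpha$), letting $\alpha\to\infty$; by Fatou / monotone arguments the left side localizes near $S$ and the inequality becomes, after writing $F$ in terms of the induced section $u(z)$ of $E_{z}$ (Fubini along the fibration), precisely the comparison $\|u\|^{2}_{z}$-growth estimate that Berndtsson uses — namely that the minimal-norm holomorphic extension of $u_{0}$ has, to second order in $z$, norm controlled by the curvature term. The standard Berndtsson/Hörmander-type argument then converts this optimal second-order estimate into the curvature inequality $\sqrt{-1}\Theta^{E}\geq 0$ in the Nakano sense: one expands $\|u(z)\|^{2}_{z}=1-c|z|^{2}+o(|z|^{2})$ where $c$ is the curvature coefficient, and the optimal constant $\mathbf{C}=1$ with the chosen $c_{A}$ is exactly what rules out $c>0$, i.e. forces $\langle\Theta^{E}u_{0},u_{0}\rangle\geq 0$. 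The (semi)positive dichotomy is handled uniformly since everything is an inequality.

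The main obstacle I expect is the bookkeeping in the last step: translating the single scalar optimal-$L^{2}$ inequality into a genuine Nakano-positivity statement for the vector bundle $E$. Nakano positivity is a statement about the full curvature form paired against decomposable tensors $\partial_{z}\otimes u$, and one must verify that choosing $u_{0}$ in the fiber $E_{0}$ and minimizing over extensions $F$ indeed computes the geodesic curvature / Kodaira–Spencer contribution correctly — in particular that the extension $F$ may be taken to be, up to first order, the "horizontal lift" realizing $\nabla^{(1,0)}u$, so that the second-order term in $\|u(z)\|^{2}_{z}$ is the curvature and not some larger quantity. This is precisely the place where the optimality $\mathbf{C}=1$ is indispensable (a nonoptimal constant gives nothing), and where one invokes, as in \cite{guan-zhou13ap} for the Bergman-kernel case, that the extremal extension furnished by the theorem is the $L^{2}$-minimal one. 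A secondary technical point is checking the almost-Stein hypothesis for the compact Kähler total space $M$ (one removes from $M$ a suitable analytic set so that the complement is Stein and carries the extension), and verifying the adjunction-type identification $dV_{M}[\log|z\circ p|^{2}]=\pi\cdot(\text{fiber volume on }M_{0})$ so that the right-hand side is exactly $\pi\|u_{0}\|^{2}_{0}$ with no stray constants; both are routine but must be stated.
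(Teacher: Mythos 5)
Your overall strategy is the one the paper uses: extend a fixed $u_{0}\in E_{0}$ from the central fiber by the optimal $L^{2}$ extension theorem, integrate along the fibers, and convert the resulting norm comparison into Nakano positivity of $(E,\|\cdot\|)$. But the step you yourself flag as ``the main obstacle'' --- converting the scalar $L^{2}$ inequality into the curvature inequality --- is precisely the content the paper supplies, and your sketch of it is not yet a proof. The paper's route is: take $c_{A}\equiv 1$, $A=0$, and the $r$-dependent weight $\Psi=2p^{*}\log\frac{|z|}{r}$, so that the extension estimate becomes exactly the sub-mean-value inequality $\frac{1}{\pi r^{2}}\int_{\Delta_{r}}\|u(z)\|_{z}^{2}\leq\|u_{0}\|_{0}^{2}$ for every small $r$; Lemma \ref{lem:Nakano} then shows (by contradiction, expanding the metric in a normal frame $\langle e_{j},e_{k}\rangle=\delta_{jk}-c_{jk}|z|^{2}+O(|z|^{3})$) that this property at all small radii forces Nakano semipositivity at $0$. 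The crucial point of that lemma is that it needs \emph{no} information about which extension the theorem produces: if $c_{11}<0$, then \emph{any} holomorphic $u=\sum e_{j}f_{j}$ with $u(0)=e_{1}(0)$ already violates the average bound, because $|f_{1}|^{2}$ is subharmonic and the negative curvature term only increases the average. So your worry that one must identify the extension with the horizontal lift realizing $\nabla^{(1,0)}u$ is a red herring; conversely, your proposed substitute (``the standard Berndtsson/H\"ormander-type argument'') is exactly the missing ingredient, not an available black box, since Theorem \ref{thm:bernd09} is the statement being proved. Your alternative localization ($\Psi=\log|z\circ p|^{2}$ fixed, $c_{A}(t)=e^{-\alpha e^{-t}}$, $\alpha\to\infty$) would in principle extract the same second-order information, but it still requires an analogue of Lemma \ref{lem:Nakano} to finish, and you would have to justify the asymptotic expansion of $\int e^{-\alpha|z|^{2}}\|u(z)\|^{2}$ uniformly in the (possibly $\alpha$-dependent) extension.

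A second genuine gap: your closing claim that ``the (semi)positive dichotomy is handled uniformly since everything is an inequality'' is false. Semipositivity of $L$ only yields Nakano \emph{semi}positivity of $E$ by this argument; to get strict positivity from strict positivity of $L$ the paper uses the extra room to run the same extension with the weight twisted by $e^{\varepsilon|z|^{2}}$ (i.e.\ to show $(E,\|\cdot\|e^{\varepsilon|z|^{2}})$ is still Nakano semipositive), and then invokes Lemma \ref{lem:strict_nakano} to upgrade to strict Nakano positivity of $(E,\|\cdot\|)$. Without some such device, your argument as written proves only the semipositive half of Theorem \ref{thm:bernd09}.
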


\section{Preparations}

The following lemma will be used to prove Theorem \ref{prop:twist_open}

\begin{lemma}
\label{lem:integ}
For any two measurable spaces $(X_{i},\mu_{i})$ and two measurable functions $g_{i}$ on $X_{i}$ respectively ($i\in\{1,2\}$),
if $\mu_{1}(\{g_{1}\geq r^{-1}\})\geq\mu_{2}(\{g_{2}\geq r^{-1}\})$ for any $r\in(0,r_{0}]$, then $\int_{\{g_{1}\geq r_{0}^{-1}\}} g_{1}d\mu_{1}\geq\int_{\{g_{2}\geq r_{0}^{-1}\}} g_{2}d\mu_{2}$.
\end{lemma}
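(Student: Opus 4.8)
The plan is to reduce everything to the distributional ("layer-cake") representation of the integral of a nonnegative function. Concretely, for a nonnegative measurable function $g$ on a measure space $(X,\mu)$ and any threshold $a>0$, one has the identity
\[
\int_{\{g\geq a\}} g\, d\mu \;=\; a\,\mu(\{g\geq a\})\;+\;\int_{a}^{\infty}\mu(\{g\geq s\})\, ds,
\]
which follows from Fubini/Tonelli applied to $g(x)\mathbb{I}_{\{g(x)\geq a\}}=a\,\mathbb{I}_{\{g(x)\geq a\}}+\int_{a}^{\infty}\mathbb{I}_{\{g(x)\geq s\}}\, ds$ (the inner identity being valid pointwise whenever $g(x)\geq a$). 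The strategy is then: write this representation for $g_1$ on $(X_1,\mu_1)$ and for $g_2$ on $(X_2,\mu_2)$, each with threshold $a=r_0^{-1}$, and compare the two expressions term by term using the hypothesis.

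First I would record that the hypothesis $\mu_1(\{g_1\geq r^{-1}\})\geq\mu_2(\{g_2\geq r^{-1}\})$ for all $r\in(0,r_0]$ is exactly the statement $\mu_1(\{g_1\geq s\})\geq\mu_2(\{g_2\geq s\})$ for all $s\geq r_0^{-1}$, after the substitution $s=r^{-1}$ (note $r\mapsto r^{-1}$ is a decreasing bijection from $(0,r_0]$ onto $[r_0^{-1},\infty)$). In particular, at the single value $s=r_0^{-1}$ we get $\mu_1(\{g_1\geq r_0^{-1}\})\geq\mu_2(\{g_2\geq r_0^{-1}\})$, which handles the boundary term; and for every $s$ in the range of integration $[r_0^{-1},\infty)$ we get the pointwise inequality between the integrands $\mu_i(\{g_i\geq s\})$, so the integral terms compare as well. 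Adding the two comparisons (after multiplying the boundary comparison by the positive constant $r_0^{-1}$) yields $\int_{\{g_1\geq r_0^{-1}\}} g_1\, d\mu_1\geq\int_{\{g_2\geq r_0^{-1}\}} g_2\, d\mu_2$, which is the claim.

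There is essentially no deep obstacle here; the only points requiring a little care are bookkeeping ones. One must make sure the layer-cake identity is invoked in the form with a cutoff at $a$ rather than the more familiar cutoff-free version, and one must be slightly careful that the functions $g_i$ need not be assumed nonnegative globally — but on the sets $\{g_i\geq r_0^{-1}\}$ they are positive (since $r_0\in(0,1]$ presumably, or in any case $r_0^{-1}>0$), so the identity and all subsequent manipulations take place on those sets and the contributions are well-defined in $[0,+\infty]$, with the inequality holding even if some quantity is infinite. If one prefers to avoid the distributional identity, an alternative is a direct Fubini argument: write $\int_{\{g_i\geq r_0^{-1}\}}g_i\,d\mu_i=\int_{X_i}\big(\int_0^{\infty}\mathbb{I}_{\{t< g_i(x)\}}\mathbb{I}_{\{g_i(x)\geq r_0^{-1}\}}\,dt\big)d\mu_i(x)=\int_0^{\infty}\mu_i\big(\{g_i\geq\max(t,r_0^{-1})\}\big)\,dt$ and compare the integrands for each $t\geq0$ using the hypothesis, since $\max(t,r_0^{-1})\geq r_0^{-1}$ always. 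Either route is short; I would present the layer-cake version for transparency.
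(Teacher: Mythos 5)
Your proof is correct and follows essentially the same idea as the paper's: both decompose $g_i$ by its superlevel sets above the threshold $r_0^{-1}$ and compare level by level using the hypothesis. The only difference is one of implementation --- the paper uses the dyadic discretization $g_{i,m}=r_{0}^{-1}\chi_{\{g_{i}\geq r_{0}^{-1}\}}+2^{-m}\sum_{n\geq 1}\chi_{\{g_{i}\geq r_{0}^{-1}+n2^{-m}\}}$ together with monotone (Levi's) convergence, whereas you invoke the continuous layer-cake identity justified by Tonelli; these are interchangeable.
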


\begin{proof}
Consider the functions $g_{i,m}:=r_{0}^{-1}\chi_{\{g_{i}\geq r_{0}^{-1}\}}+\frac{1}{2^{m}}\sum_{n\geq 1}\chi_{\{g_{i}\geq r_{0}^{-1}+\frac{n}{2^{m}}\}}$,
$m\in\mathbb{N}^{+}$.
One can check that $g_{i,m}$ is increasing with respect to $m$, and convergent to $g_{i}$ on $\{g_{i}\geq r_{0}^{-1}\}$ when $m$ goes to $+\infty$.
As $\mu_{1}(\{g_{1}\geq r^{-1}\})\geq\mu_{2}(\{g_{2}\geq r^{-1}\})$ for any $r\in(0,r_{0}]$,
then
$\int_{\{g_{1}\geq r_{0}^{-1}\}} g_{1,m}d\mu_{1}\geq \int_{\{g_{2}\geq r_{0}^{-1}\}} g_{2,m}d\mu_{2}$ for any $m\in\mathbb{N}^{+}$.
By Levi's Theorem, it follows that $\int_{\{g_{i}\geq r_{0}^{-1}\}} g_{i}d\mu_{i}=\lim_{m\to+\infty}\int_{\{g_{i}\geq r_{0}^{-1}\}} g_{i,m}d\mu_{i}$,
which deduces the present lemma.
\end{proof}

Let $c_{A}(t)=1$, $A=0$, $p:M\to\Delta$ be a projective family over unit disc $\Delta$ with coordinate $z$,
and $L$ be a smooth semipositive line bundle on $M$,
and $\Psi=2p^{*}\log\frac{|z|}{r}$ (similar method in \cite{guan-zhou13ap} implies the compact K\"{a}hler family case).
Then the combination of Theorem \ref{t:guan-zhou-unify} and
the following lemma implies (the semi-positive part of) Proposition \ref{prop:bern_positive}.

\begin{lemma}
\label{lem:Nakano}
Let $(V,h)$ be a trivial hermitian vector bundle rank $m$ over the unit disc $\Delta$ with coordinate $z$.
Assume that for any $u_{0}\in \mathbb{C}^{m}$ and $r>0$,
there exists holomorphic section $u$ of $V|_{\Delta_{r}}$,
such that $u(0)=u_{0}$ and $\frac{1}{\pi r^{2}}\int_{\Delta_{r}}|u|_{h}^{2}\leq |u_{0}|_{h}^{2}$.
Then $(V,h)$ is Nakano semi-positive at $0\in\Delta\subset\mathbb{C}$.
\end{lemma}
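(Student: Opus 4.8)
The plan is to verify Nakano semi-positivity of $(V,h)$ at $0$ by a direct second-order computation of the Chern curvature $\Theta_h$ at that point. Since $\Delta$ is one-dimensional, Nakano and Griffiths semi-positivity coincide there (every element of $V_0\otimes T_0\Delta$ being decomposable), so it suffices to show $\langle\sqrt{-1}\Theta_h(0)u_0,u_0\rangle\ge 0$ for every $u_0\in\mathbb{C}^m$. First I would pass to a holomorphic frame of $V$ that is normal at $0$: a constant linear change of frame makes $h(0)$ the identity matrix; a holomorphic change $u\mapsto(I+zA)u$ with suitable $A$ kills the first-order terms of $h$ at $0$ (hence also $\partial_{\bar z}h(0)=0$, by the Hermitian symmetry of $h$); and a further change $u\mapsto(I+z^2B)u$ with suitable $B$ removes the pure $z^2$ term. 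In such a frame $h(z)=I+|z|^2c+O(|z|^3)$ for a Hermitian matrix $c$, and a short computation with $\Theta_h=\bar\partial(h^{-1}\partial h)$ gives $\sqrt{-1}\Theta_h(0)=-\sqrt{-1}\,c\,dz\wedge d\bar z$. Hence the assertion to be proved is equivalent to $\langle cu_0,u_0\rangle\le 0$ for every $u_0\in\mathbb{C}^m$.

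Next I would exploit the hypothesis. Fix $u_0\in\mathbb{C}^m$. For each small $r>0$ choose, as provided, a holomorphic $u_r$ on $\Delta_r$ with $u_r(0)=u_0$ and $\frac{1}{\pi r^2}\int_{\Delta_r}|u_r|_h^2\le|u_0|^2$, and expand $u_r(z)=\sum_{k\ge 0}z^k u_{r,k}$ with $u_{r,0}=u_0$. Plugging the expansions of $h$ and of $u_r$ into $|u_r|_h^2=\langle h(z)u_r(z),u_r(z)\rangle$ and using that $\frac{1}{\pi r^2}\int_{\Delta_r}z^p\bar z^q\,dV$ vanishes unless $p=q$ and equals $\frac{r^{2p}}{p+1}$ when $p=q$, only the monomials $z^k\bar z^k$ survive the disc average, and one is led to
\[
\frac{1}{\pi r^2}\int_{\Delta_r}|u_r|_h^2\;=\;|u_0|^2\;+\;S_r\;+\;\frac{r^2}{2}\langle cu_0,u_0\rangle\;+\;\theta_r,
\]
where $S_r:=\sum_{k\ge 1}\frac{r^{2k}}{k+1}|u_{r,k}|^2\ge 0$ is the contribution of the $\langle u_r,u_r\rangle=|u_r|^2$ part, coming purely from the holomorphy of $u_r$, and $\theta_r$ gathers the contribution of the $O(|z|^3)$ tail of $h$ together with that of the $|z|^2c$ term paired with $u_{r,k}$ for $k\ge 1$. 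The delicate point here is that the hypothesis controls $u_r$ only in an averaged $L^2$ sense, so the higher Taylor coefficients $u_{r,k}$ are essentially free (in fact $|u_{r,k}|$ may grow like $r^{1-k}$); the normalization $\partial h(0)=0$, together with the removal of the pure $z^2$ term, is exactly what makes $\theta_r$ manageable, allowing an estimate of the shape $|\theta_r|\le \|c\|r^2 S_r+C_0 r^3\big(|u_0|^2+S_r\big)$ with $C_0$ depending only on $h$ near $0$.

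Finally, combining the displayed identity with $\frac{1}{\pi r^2}\int_{\Delta_r}|u_r|_h^2\le|u_0|^2$ gives $S_r+\frac{r^2}{2}\langle cu_0,u_0\rangle+\theta_r\le 0$, hence $\frac{r^2}{2}\langle cu_0,u_0\rangle\le -\big(1-\|c\|r^2-C_0 r^3\big)S_r+C_0 r^3|u_0|^2$. For $r$ small the coefficient of $S_r$ is $\le 0$, so $\frac{r^2}{2}\langle cu_0,u_0\rangle\le C_0 r^3|u_0|^2$; dividing by $r^2$ and letting $r\to 0$ yields $\langle cu_0,u_0\rangle\le 0$. As $u_0$ was arbitrary this gives $c\le 0$, i.e. $\sqrt{-1}\Theta_h(0)\ge 0$ in the sense of Nakano, which is the claim. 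The main obstacle is the bookkeeping in the second step: one must organize the error terms so that everything except $\frac{r^2}{2}\langle cu_0,u_0\rangle$ is either a genuine $O(r^3)$ uniformly in $r$ and in the choice of $u_r$, or else is absorbed into the manifestly nonnegative quantity $S_r$ produced by the holomorphy of the extensions.
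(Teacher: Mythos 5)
Your argument is correct and follows essentially the same route as the paper's: both pass to a holomorphic frame normal at $0$ in which $h=I+c|z|^2+O(|z|^3)$ (so that Nakano semi-positivity at $0$ reduces to a sign condition on the Hermitian matrix $c$), and both compare the disc average of $|u|_h^2$ with $|u_0|^2$, using that the holomorphic Taylor coefficients of $u$ contribute nonnegatively to that average while the curvature contributes $\tfrac{r^2}{2}\langle cu_0,u_0\rangle$ up to $O(r^3)$ errors. The only difference is presentational: the paper argues by contradiction with a fixed small $r$ after diagonalizing $c$, whereas you compute the average directly and let $r\to 0$, which handles all directions (including null ones) uniformly.
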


\begin{proof}
It is known that there exists a holomorphic frame $(e_{j})_{1\leq j\leq m}$ of $V$ on a neighbourhood of $0\in\Delta\subset\mathbb{C}$,
such that
\begin{equation}
\label{equ:curv}
\langle e_{j},e_{k}\rangle=\delta_{j,k}-c_{j,k}|z|^{2}+O(|z|^{3}),
\end{equation}
where $c_{j,k}$ is the Chern curvature tensor at $0$ (see \cite{demailly-book}, Chapter V, Proposition 12.10).

We prove the present lemma by contradiction: if not, there exists a holomorphic frame $(e_{j})_{1\leq j\leq m}$ on a neighbourhood of $0\in\Delta\subset\mathbb{C}$,
such that $c_{1,1}<0$ $c_{j,k}=0$ for any $j\neq k$ in equality \ref{equ:curv} (by the unitary transformation of $(e_{j})_{1\leq j\leq m}$).

Let $u_{0}$ be $e_{1}(0)$.
Then exists section $u$ on $TV_{\Delta_{r}}$,
such that
\begin{equation}
\label{equ:curv_1}
u=\sum_{1\leq j\leq m}e_{j}f_{j}
\end{equation}
and
\begin{equation}
\label{equ:curv_2}
\frac{1}{\pi r^{2}}\int_{\Delta_{r}}(|f|^{2}-\sum_{1\leq j,k\leq m}(c_{j,k}f_{j}\bar{f}_{k}|z|^{2}+O(|z|^{3})|f|^{2}))
\leq |u_{0}|_{h}^{2}
\end{equation}
where
$f_{j}$ are holomorphic functions on $\Delta_{r}$ and $f_{j}(0)=\delta_{1,j}$,
$|f|^{2}=\sum_{1\leq j\leq m}|f_{j}|^{2}$, and $O(|z|^{3})$ is independent of $r$.
One can choose $r$ small enough such that

$(1)$ $|c_{j,j}|r^{2}\leq\frac{1}{6m}$, which implies $|c_{j,j}f_{j}\bar{f}_{j}||z|^{2}\leq \frac{1}{6m}(|f_{j}|^{2})$,
where $2\leq j,k\leq m$;

$(2)$ $|O(|z|^{3})|\leq -\frac{1}{6}c_{1,1}|z|^{2}$ and $|O(|z|^{3})|\leq \frac{1}{6}$ on $\Delta_{r}$, which implies
$|O(|z|^{3})||f|^{2}\leq -\frac{1}{6}c_{1,1}|z|^{2}|f_{1}|^{2}+\frac{1}{6}\sum_{2\leq j\leq m}|f_{j}|^{2}$
on $\Delta_{r}$

Combining (1) and (2),
one can obtain that
\begin{equation}
\label{equ:curv_3}
\begin{split}
&|f|^{2}-\sum_{1\leq j,k\leq n}(c_{j,k}f_{j}\bar{f}_{k}|z|^{2}+O(|z|^{3})|f|^{2})
\\\geq&
|f|^{2}-c_{1,1}|f_{1}|^{2}|z|^{2}
-\sum_{2\leq j,k\leq m}|c_{j,j}f_{j}\bar{f}_{j}||z|^{2}
-|O(|z|^{3})||f|^{2}
\\\geq&
|f|^{2}-c_{1,1}|f_{1}|^{2}|z|^{2}
-\sum_{2\leq j\leq m}\frac{1}{6m}(|f_{j}|^{2})
-(-\frac{1}{6}c_{1,1}|z|^{2}|f_{1}|^{2}+\frac{1}{6}\sum_{2\leq j\leq m}|f_{j}|^{2})
\\\geq &
|f_{1}|^{2}-\frac{c_{1,1}}{2}|f_{1}|^{2}|z|^{2}.
\end{split}
\end{equation}
As $c_{1,1}<0$,
it follows that
$\frac{1}{\pi r^{2}}\int_{\Delta_{r}}(|f_{1}|^{2}-\frac{c_{1,1}}{2}|f_{1}|^{2}|z|^{2})>|f_{1}(0)|^{2}=|u_{0}|_{h}^{2}$.
Combining with inequality \ref{equ:curv_2} and \ref{equ:curv_3},
we obtain
$\frac{1}{\pi r^{2}}\int_{\Delta_{r}}|u|_{h}^{2}> |u_{0}|_{h}^{2}$,
which contradicts $\frac{1}{\pi r^{2}}\int_{\Delta_{r}}|u|_{h}^{2}\leq |u_{0}|_{h}^{2}$.
Then we obtain the present lemma.
\end{proof}

Lemma \ref{lem:Nakano} implies the following

\begin{lemma}
\label{lem:strict_nakano}
Let $(V,h)$ be a hermitian vector bundle rank $m$ on the unit disc $\Delta$ with coordinate $z$.
Assume that there exists $\varepsilon>0$
such that $(V,he^{\varepsilon|z|^{2}})$ is Nakano semipositive at $0\in\Delta\subset\mathbb{C}$.
Then $(V,h)$ is strictly Nakano positive at $0\in\Delta\subset\mathbb{C}$.
\end{lemma}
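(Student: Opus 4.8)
The plan is to reduce the lemma to a short pointwise curvature computation, carried out in the normal holomorphic frame at $0$ that already appears in the proof of Lemma \ref{lem:Nakano}. The way Lemma \ref{lem:Nakano} enters is that its proof identifies, over a one-dimensional base, Nakano (semi)positivity at a point with (semi)positive-definiteness of the $m\times m$ Chern curvature matrix at that point; granting this dictionary, the present lemma becomes a line of linear algebra.

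First I would fix a holomorphic frame $(e_j)_{1\le j\le m}$ of $V$ near $0$ which is normal for $h$, so that \eqref{equ:curv} holds with $C:=(c_{jk})$ the (Hermitian) Chern curvature matrix of $(V,h)$ at $0$. Then I would expand $h':=he^{\varepsilon|z|^2}$ in the same frame: multiplying \eqref{equ:curv} by $e^{\varepsilon|z|^2}=1+\varepsilon|z|^2+O(|z|^4)$ gives
$$\langle e_j,e_k\rangle_{h'}=\delta_{jk}-(c_{jk}-\varepsilon\delta_{jk})|z|^2+O(|z|^3).$$
Because the conformal factor contributes nothing linear in $z$ or $\bar z$, the frame $(e_j)$ is still normal, now for $h'$, so its $|z|^2$-coefficient directly gives the Chern curvature matrix of $(V,h')$ at $0$, namely $C-\varepsilon I$. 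Combining the two facts: by hypothesis $(V,h')$ is Nakano semipositive at $0$, hence $C-\varepsilon I$ is positive semidefinite, i.e. $C\succeq\varepsilon I\succ 0$, so the Chern curvature matrix of $(V,h)$ at $0$ is positive definite with explicit lower bound $\varepsilon$. This is exactly strict Nakano positivity of $(V,h)$ at $0$.

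The steps I would be most careful about --- rather than the bookkeeping, which is routine --- are: (i) checking that $(e_j)$ remains a normal frame after multiplication by $e^{\varepsilon|z|^2}$, so that the curvature of $h'$ may legitimately be read off from the quadratic coefficient (immediate, since $e^{\varepsilon|z|^2}-1$ vanishes to second order at $0$); and (ii) keeping the sign straight, i.e. that tensoring with $e^{\varepsilon|z|^2}$ subtracts $\varepsilon I$ from the curvature matrix, together with the one-dimensional-base identification of Nakano (semi)positivity at $0$ with (semi)definiteness of that matrix --- the point borrowed from Lemma \ref{lem:Nakano}. I would also remark that a purely soft argument, using only the $L^2$-extension property in the hypothesis of Lemma \ref{lem:Nakano} and the inequality $e^{\varepsilon|z|^2}\ge 1$, recovers semipositivity of $(V,h)$ but not strictness, which is why the explicit curvature computation is the route to take.
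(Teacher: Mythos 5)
Your proof is correct. Note that the paper itself gives no argument for this lemma at all --- it only writes ``Lemma \ref{lem:Nakano} implies the following'' and moves on --- so there is no written proof to compare against; your direct computation is the natural way to fill this in, and it is almost certainly what the authors intended. Two small remarks. First, you credit Lemma \ref{lem:Nakano} with ``identifying Nakano (semi)positivity at a point with (semi)definiteness of the curvature matrix''; that identification is really just the definition of Nakano positivity read through the normal frame of \eqref{equ:curv} (Demailly, Ch.~V, Prop.~12.10), whereas the actual content of Lemma \ref{lem:Nakano} is an $L^2$ mean-value criterion that \emph{implies} semidefiniteness. Your argument only needs the definitional dictionary, not the lemma, which makes it more self-contained than the paper's phrasing suggests. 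Second, your closing observation is exactly the right sanity check: any soft argument using only $e^{\varepsilon|z|^2}\ge 1$ can never produce strictness, so the quantitative shift $C\mapsto C-\varepsilon I$ of the curvature matrix under tensoring by $e^{\varepsilon|z|^2}$ (sign verified: $he^{\varepsilon|z|^2}=he^{-\phi}$ with $\phi=-\varepsilon|z|^2$, so $\Theta_{h'}=\Theta_h-\varepsilon\,I\,dz\wedge d\bar z$) is genuinely the whole point of the lemma, and your normal-frame bookkeeping establishes it correctly.
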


Let $c_{A}(t)=1$, $A=0$, $p:M\to\Delta$ be a projective family over unit disc $\Delta$ with coordinate $z$,
and $L$ be a smooth positive line bundle on $M$,
and $\Psi=2p^{*}\log\frac{|z|}{r}$ (similar method in \cite{guan-zhou13ap} implies the compact K\"{a}hler family case).
Then the combination of Theorem \ref{t:guan-zhou-unify} and
Lemma \ref{lem:strict_nakano} implies (the positive part of) Proposition \ref{prop:bern_positive}.

\section{Proof of Theorem \ref{prop:twist_open}}

The proof of the result is divided into three steps.
It suffices to consider the case that $a(t)e^{t}$ is continuous.

\

\textbf{Step 1}. We will prove $B\Rightarrow A$ and $C \Rightarrow A$.

Consider $I=1$ and $\varphi=\log|z_{1}|$ on the unit polydisc $\Delta^{n}\subset\mathbb{C}$,
note that $c_{o}(\log|z_{1}|)=1$ and
\begin{equation}
\begin{split}
\int_{\Delta^{n}_{r_{0}}}a(-2\log|z_{1}|)\frac{1}{|z_{1}|^{2}}
&=
(\pi r_{0}^{2})^{n-1}\int_{\Delta_{r_{0}}}a(-2\log|z_{1}|)\frac{1}{|z_{1}|^{2}}
\\&=
(\pi r_{0}^{2})^{n-1}2\pi\int_{[0,r_{0}]}a(-2\log r)r^{-2}rdr
\\&=
(\pi r_{0}^{2})^{n-1}\pi\int_{[-2\log r_{0},+\infty]}a(t)dt,
\end{split}
\end{equation}
then we obtain $B\Rightarrow A$ and $C \Rightarrow A$.

\

\textbf{Step 2}. We will prove $A\Rightarrow B$.

Let $X_{1}$ be a small neighborhood $U$ near $o$, and $X_{2}=(0,1]$.
Let $\mu_{1}(\cdot)=\int_{\cdot}|I|^{2}d\lambda$,
where $\lambda$ be the Lebesgue measure on $U\subset\mathbb{C}^{n}$.
Let $\mu_{2}$ be the Lebesgue measure on $X_{2}$.
Let $Y_{r}:=\{\exp(-2c^{I}_{o}(\varphi)\varphi)\geq r^{-1}\}$.
Theorem \ref{thm:JM_GZ} shows that
there exists positive constant $C$
such that $\mu_{1}(Y_{r})\geq C r$
holds for any $r\in(0,1]$.

Let
$g_{1}=a(-2c^{I}_{o}(\varphi)\varphi)\exp{(-2c^{I}_{o}(\varphi)\varphi)}$ and $g_{2}(x)=a(-\log x+\log C)C x^{-1}$.
As $a(t)e^{t}$ is increasing near $+\infty$,
then it follows that $g_{1}\geq a(-\log r)r^{-1}$ on $Y_{r}$,
which gives
\begin{equation}
\label{equ:AC1}
\mu_{1}(\{g_{1}\geq a(-\log r)r^{-1}\})\geq \mu_{1}(Y_{r})\geq C r.
\end{equation}

As $a(t)e^{t}$ is convergent to $+\infty$ $(t\to +\infty)$,
then $a(-\log r)r^{-1}$ convergent to $+\infty$ $(r\to0+0)$.
As $a(t)e^{t}$ is strictly increasing near $+\infty$ and $a(t)e^{t}$ is convergent to $+\infty$ $(t\to +\infty)$,
then $\{g_{2}\geq a(-\log r)r^{-1}\}=\{-\log x+\log C\geq -\log r\}=\{0<x\leq C r\}$ for small enough $r>0$,
which implies
\begin{equation}
\label{equ:AC2}
\mu_{2}(\{g_{2}\geq a(-\log r)r^{-1}\})= \mu_{2}(\{0<x\leq Cr\})=Cr,
\end{equation}
 for small enough $r>0$.

Inequality \ref{equ:AC1} and \ref{equ:AC2} gives that
$$\mu_{1}(\{g_{1}\geq a(-\log r)r^{-1}\})\geq\mu_{2}(\{g_{2}\geq a(-\log r)r^{-1}\})$$
for any $r$ small enough.
Using the continuity of $a(-\log r)r^{-1}$ and $a(-\log r)r^{-1}$ convergence to $+\infty$ $(r\to0+0)$,
we obtain that $\mu_{1}(\{g_{1}\geq r^{-1}\})\geq\mu_{2}(\{g_{2}\geq r^{-1}\})$ for any $r>0$ small enough.
By Lemma \ref{lem:integ}, we obtain $A \Rightarrow B$.

\

\textbf{Step 3}. We will prove $A\Rightarrow C$.

Let $X_{1}$ be a small neighborhood near $o\in\mathbb{C}^{n}$, and $X_{2}=(0,1]$,
and $\mu_{1}$ and $\mu_{2}$ be the Lebesgue measure on $X_{1}$ and $X_{2}$ respectively.
Let $Y_{r}:=\{\exp(-2c^{I}_{o}(\varphi)\varphi+2\log|I|)\geq r^{-1}\}$.
Theorem \ref{thm:JM_GZ} shows that
there exists positive constant $C$
such that $\mu_{1}(Y_{r})\geq C r$
holds for any $r\in(0,1]$.

Let
$$g_{1}=a(-2c^{I}_{o}(\varphi)\varphi+2\log|I|)\exp{(-2c^{I}_{o}(\varphi)\varphi+2\log|I|)}$$
and
$$g_{2}(x)=a(-\log x+\log C)C x^{-1}.$$
As $a(t)e^{t}$ is increasing near $+\infty$,
then it follows that $g_{1}\geq a(-\log r)r^{-1}$ on $Y_{r}$,
which gives
\begin{equation}
\label{equ:AB1}
\mu_{1}(\{g_{1}\geq a(-\log r)r^{-1}\})\geq \mu_{1}(Y_{r})\geq C r.
\end{equation}

As $a(t)e^{t}$ is convergent to $+\infty$ $(t\to +\infty)$,
then $a(-\log r)r^{-1}$ convergent to $+\infty$ $(r\to0+0)$.
As $a(t)e^{t}$ is strictly increasing near $+\infty$ and $a(t)e^{t}$ is convergent to $+\infty$ $(t\to +\infty)$,
then $\{g_{2}\geq a(-\log r)r^{-1}\}=\{-\log x+\log C\geq -\log r\}=\{0<x\leq C r\}$ for small enough $r>0$,
which implies
\begin{equation}
\label{equ:AB2}
\mu_{2}(\{g_{2}\geq a(-\log r)r^{-1}\})= \mu_{2}(\{0<x\leq C r\})=Cr,
\end{equation}
for small enough $r>0$.

Inequalities \ref{equ:AB1} and \ref{equ:AB2} give that
$$\mu_{1}(\{g_{1}\geq a(-\log r)r^{-1}\})\geq\mu_{2}(\{g_{2}\geq a(-\log r)r^{-1}\})$$
for any $r>0$ small enough.
Using the continuity of $a(-\log r)r^{-1}$ and $a(-\log r)r^{-1}$ convergence to $+\infty$ $(r\to0+0)$,
we obtain that $\mu_{1}(\{g_{1}\geq r^{-1}\})\geq\mu_{2}(\{g_{2}\geq r^{-1}\})$ for any $r>0$ small enough.
By Lemma \ref{lem:integ}, we obtain $A \Rightarrow C$.

\bibliographystyle{references}
\bibliography{xbib}

\end{document}